\newcommand{\A}{\ensuremath{{\mathbb{A}}}}
\newcommand{\C}{\ensuremath{{\mathbb{C}}}}
\newcommand{\Z}{\ensuremath{{\mathbb{Z}}}\xspace}
\renewcommand{\P}{\ensuremath{{\mathbb{P}}}}
\newcommand{\G}{\ensuremath{{\mathbb{G}}}}
\newcommand{\ra}{\rightarrow}
\newcommand{\lra}{\longrightarrow}
\newcommand\Hom{\operatorname{Hom}}
\newcommand\Aut{\operatorname{Aut}}
\newcommand\Sym{\operatorname{Sym}}
\newcommand\Tr{\operatorname{Tr}}
\newcommand\tensor{\otimes}
\newcommand\isom{\stackrel{\sim}{\ra}}
\newcommand\sub{\subset}
\newcommand\tesnor{\otimes}
\newcommand\disc{\operatorname{disc}}
\newcommand\GL{\operatorname{GL}}
\newcommand\Spec{\operatorname{Spec}}
\newcommand\Proj{\operatorname{Proj}}
\newcommand\Jac{\operatorname{Jac}}
\newcommand\End{\operatorname{End}}
\newcommand\ts{^{\tensor 2}}
\renewcommand\O{\mathcal{O}}
\newcommand\BS{\ensuremath{S}\xspace}
\newcommand\OS{\ensuremath{{\O_\BS}}\xspace}
\newcommand\Pic{\operatorname{Pic}}
\newcommand\map[4]{\ensuremath{\begin{array}{ccc}#1&\lra&#2\\#3&\mapsto&#4\end{array}}}
\newcommand\bq{\begin{equation}}
\newcommand\eq{\end{equation}}
\newtheorem{proposition}{Proposition}[section]
\newtheorem{theorem}[proposition]{Theorem}
\newtheorem{example}[proposition]{Example}
\newtheorem{question}[proposition]{Question}
\newtheorem{lemma}[proposition]{Lemma}
\numberwithin{equation}{section}
\theoremstyle{remark}
\newtheorem{remark}[proposition]{Remark}
\renewcommand{\O}{\ensuremath{{\mathcal{O}}}}
\newtheorem{definition}[proposition]{Definition}
\newtheorem{nts}{Note to self}
\newcommand{\Jbar}{\ensuremath{ \overline{\mathscr{J}}} }
\newcommand{\Jbd}{\ensuremath{ \overline{\mathscr{J}}_{\text{bd}}} }
\newcommand{\Jbdred}{\ensuremath{ \overline{\mathscr{J}}_{\text{bd,red}}} }
\newcommand{\UJac}{\ensuremath{ \mathscr{J}}}
\newcommand{\Hbar}{\ensuremath{ \overline{\mathscr{H}}} }
\newcommand{\Ext}{\ensuremath{ \text{Ext}} }
\newcommand{\kk}{\ensuremath{ \mathbf{k}} }
\newcommand{\defi}[1]{{\upshape\sffamily #1}}
\title{Gauss Composition for $\P^1$, and the universal Jacobian of the Hurwitz space of double covers}
\author{Daniel Erman}
\address{Department of Mathematics\\
University of Michigan \\ East Hall\\ Ann Arbor, MI  USA
48109-1043}
\email{erman@umich.edu}
\author{ Melanie Matchett Wood}
\address{Department of Mathematics\\
University of Wisconsin-Madison \\ 480 Lincoln Drive \\
Madison, WI 53705 USA\\
and
American Institute of Mathematics\\360 Portage Ave \\
Palo Alto, CA 94306-2244 USA}
\email{mmwood@math.wisc.edu}
\thanks{The first author was partially supported 
by an NDSEG fellowship and NSF Award No.\ 1003997.  The second author
 was supported by an AIM Five-Year Fellowship, NSF Award Nos.\ DMS-1147782 and DMS-1301690, a Packard Fellowship for Science and Engineering, and a Sloan Research Fellowship.}
\begin{document}
\maketitle
\begin{abstract}
In this paper, we give an explicit description of the moduli space of line bundles on hyperelliptic curves, including singular curves.
We study the universal Jacobian $\UJac^{2,g,n}$ of degree $n$ line bundles over the Hurwitz stack of double covers of $\P^1$ by a curve of genus $g$.   Our main results are: the construction of a smooth, irreducible, universally closed (but not separated) moduli compactification $\Jbd^{2,g,n}$ of $\UJac^{2,g,n}$
whose points we describe simply and explicitly as sections of certain vector bundles on $\P^1$; a description of the global geometry and moduli properties of these stacks; and a computation of the Picard groups of $\Jbd^{2,g,n}$ and $\UJac^{2,g,n}$ in the cases when $n-g$ is even.
An important ingredient of our work is the parametrization of line bundles on double covers by binary quadratic forms.  This parametrization generalizes the classical number theoretic correspondence between ideal classes of quadratic rings and integral binary quadratic forms, which in particular gives the group law on integral binary quadratic forms first discovered by Gauss. 
\end{abstract}

\section{Introduction}
The questions of how to 
compactify the Jacobian of a singular curve and 
 how to extend the universal Picard variety over various moduli spaces of curves have
 been studied extensively in the last several decades.  In this paper, we  answer those questions for hyperelliptic curves. 
Specifically, we describe the universal Jacobian over the Hurwitz stack of double covers of $\P^1$, as well as a larger moduli stack which acts as a sort of moduli compactification of this universal Jacobian.  We use $\text{Hur}^{2,g}$ to denote the Hurwitz stack of double covers of $\P^1$ by a genus $g$ curve and $\UJac^{2,g,n}$ to denote the universal Jacobian of degree $n$ line bundles over $\text{Hur}^{2,g}$.

The Hurwitz stack $\text{Hur}^{2,g}$ is a particularly simple moduli space since a
 double cover of $\P^1$ corresponds  to a
homogenous binary form. Given the simplicity and concreteness of the resulting description of $\text{Hur}^{2,g}$, one might wonder whether there is a a similarly explicit description of its universal Jacobian. In this paper, we provide such a simple,
explicit description of $\UJac^{2,g,n}$, and use this simple description to provide a compactification, and to study explicit geometric questions such as unirationality and the Picard group of $\UJac^{2,g,n}$.

In order to understand and compute with this universal Jacobian, we introduce some notation.
We say that a map $f: X\to T$ is a \defi{double cover} if $f$ is finite, flat, and degree $2$; we say that a pair $(f:X\to T, L)$ is a \defi{double covering pair} if $f:X\to T$ is a double cover and if $L\in \Pic(X)$ or, more generally,  if $L$ is a traceable $\O_X$-module.\footnote{Traceable $\O_X$-modules are a generalization of the class of line bundles on $X$ (see Definition~\ref{defn:traceable}), used towards compactifying the Jacobian of $X$.
When $X$ is a reduced curve, traceable modules are exactly rank $1$ torsion free sheaves.}  As a first approximation for compactifying $\UJac^{2,g,n}$ we consider the stack $\Jbar^{2,g,n}$ which parametrizes arbitrary double covering pairs $(f: C\to \P^1, L)$ where $p_a(C)=g$ and where  $\deg(L)=n$.  

The stack $\Jbar^{2,g,n}$ turns out to have extraneous components (i.e. $\UJac^{2,g,n}$ is not always a dense open subset)
and this motivates the definition of an intermediate substack where we impose some boundedness conditions:
\[
\UJac^{2,g,n}\subseteq \Jbd^{2,g,n} \subseteq \Jbar^{2,g,n}.
\]
This intermediate stack $\Jbd^{2,g,n}$ (see \S\ref{sec:Jbd} for a definition) is smooth, irreducible, and universally closed, and hence it acts as a sort of compactification of $\UJac^{2,g,n}$ . Further, due to the existence of a unirational stratification of $\Jbd^{2,g,n}$, we are able to work quite explicitly with $\Jbd^{2,g,n}$.  Here we say that a stack is \defi{unirational} if it admits a dominant map from a rational variety.  For instance, any quotient of affine space is unirational.

The study of $\Jbd^{2,g,n}$ is related to concrete questions about limits of double covering pairs, and we next describe one such question.  Let $B$ be a curve with a point $P\in B$.  Consider a double covering pair $(f: C\to \P^1_{B-P},  L)$ where $C\to \P^1_{B-P}$ is a flat family of smooth hyperelliptic curves of genus $g$, and $L$ is a flat family of line bundles of degree $n$ on the curves.
\begin{question}\label{question}
How can you define the limit $(C_P, L_P)$ of the double covering pair $(f: C\to \P^1_{B-P}, L)$ over $B-P$?  How do you write down such a limit explicitly?
\end{question}

The answer to this question depends on the properties that we choose to prioritize.  For instance, the Deligne--Mumford compactification of $\mathscr M_g$ provides one approach for selecting the limiting curve $C_P$.  Further, in \cite{caporaso1} Caporaso constructs a compactification of the universal Jacobian $\mathscr P_g$ that is compatible with the Deligne--Mumford compactification.  Following this approach, one may thus obtain a unique limit pair $(C_P,L_P)$ where the curve $C_P$ is stable, and where $L_P$ is a rank one torsion-free sheaf on $C_P$.\footnote{Strictly speaking, in \cite{caporaso1}, $L_P$ is viewed as a line bundle on a destabilization of $C_P$.  But \cite[\S10]{pandharipande-compactification} relates this description to a description of slope semi-stable rank $1$ torsion-free sheaves on $C_P$.}
However, the curve $C_P$ generally does not admit a flat double cover of $\P^1$.  Thus, roughly speaking, this approach prioritizes the geometry of the limiting curve over the flatness of the double cover of $\P^1$.

We provide a different approach to the moduli problem in Question~\ref{question}, by prioritizing the flatness of the double cover of $\P^1$.  The explicit coordinates on the space $\Jbd^{2,g,n}$ that we construct gives a limit pair $(f_P: C_P\to \P^1, L_P)$ where $f_P$ is a double cover of $\P^1$, and where $L_P$ is a traceable $\O_{C_P}$-module.  The curve $C_P$ is obtained by the classical construction of taking the limit of the corresponding degree $2g+2$ binary form on $\P^1$; thus, the cost of maintaining the double cover of $\P^1$ is that we must allow limiting curves $C_P$ which are not semi-stable.  
We note that, when $C_P$ is reducible, any line bundle $L_P$ arising as a limit must satisfy a bi-degree balancing condition; this is analogous to Caporaso's~\cite[Basic Inequality]{caporaso1}.
The other main difference between our work and much of the previous work on compactified Jacobians is that we give a completely explicit, concrete description of the points of our moduli spaces as sections of certain vector bundles on $\P^1$.  This explicitness can be seen in the ease of constructing examples of points in the space
(e.g. see Sections~\ref{subsec:toric} and \ref{sec:sets}) and in computing the Picard groups of the moduli stacks we construct (see Section~\ref{sec:Pic}).

Our method for constructing and studying $\Jbd^{2,g,n}$ is based on an extension of Gauss composition to the geometric setting.  Gauss composition is a group law on binary quadratic forms which can be understood by the classical bijection between ideal classes in quadratic orders and binary quadratic forms with integral coefficients.  Recent work of the second author extends this bijection from $\Spec(\Z)$ to an arbitrary base scheme $S$.  More specifically,~\cite[Thm.\ 1.4]{BinQuad} provides an explicit and functorial bijection
$$
 \left\{\parbox{2.5 in}{isomorphism classes of linear binary quadratic 
forms/$S$ }\right\}\longleftrightarrow \left\{\parbox{2.5 in}{isomorphism classes of $(C,M)$, with $C$ a double cover of $S$, and
$M$ a traceable $\O_C$-module }\right\},
$$
for any base scheme $S$, where linear binary quadratic forms will be defined in Defn.~\ref{defn:linbinquad}.  The definition of a traceable module essentially stems from a desire to obtain this bijection.   (There is a large literature on similar structural theorems about $n$-fold covers, without a line bundle, when $n\leq 5$~\cite{delone-faddeev, miranda-triple,pardini-triple, hahn-miranda,casnati-ekedahl-34, casnati-5, wood-thesis,wood-quartics}.  There is also a structural theorem for $3$-fold covering pairs~\cite{melanie-2nn}.)

In essence, this paper works with $\P^1$ to provide a model for how this generalized theory of Gauss composition over a base scheme may be applied to the study of concrete moduli problems about families of line bundles.  As we will see, this requires a detailed and nontrivial analysis of families of linear binary quadratic forms.

 By working with linear binary quadratic forms over $\P^1$, we use the above bijection to provide an explicit picture of the moduli spaces $\UJac^{2,g,n}$, $\Jbd^{2,g,n}$, and other related moduli stacks.
In~\cite[Chapter IIIa]{mumford}, Mumford also uses linear binary quadratic forms to describe Jacobians of hyperelliptic curves over $\C$.   This amounts to describing $\UJac^{2,g,n}$ as a set-theoretic union of certain explicitly defined loci.  We extend these results in several ways.  We first describe $\UJac^{2,g,n}$ (and $\Jbd^{2,g,n}$) as the union of explicitly defined quotient stacks, thus providing the underlying stacky structure of the universal Jacobian.  We then use this stacky structure to patch various pieces together to give $\UJac^{2,g,n}$.  This leads naturally to the moduli ``compactifications'' of $\UJac^{2,g,n}$ studied throughout this note, and it also extends simply to fields other than $\C$.  

We remark on one further difference between our work and \cite[IIIa]{mumford}.  We represent points of $\UJac^{2,g,n}$ as orbits of linear binary quadratic forms over $\P^1$.  This choice of representation is easily adaptable to geometric generalizations, such as where the base $\P^1$ is replaced by a higher genus curve or a surface.  By contrast, Mumford uses linear binary quadratic forms over $\A^1$ to represent points of $\UJac^{2,g,n}$, and he selects an explicit representative within each orbit, as well as a recipe for reducing an arbitrary form to its representative. Mumford's representation is amenable to performing explicit computations, as evidenced by the broad cryptography literature built upon those results (see \cite{cantor, koblitz}).

\subsection{Overview of results}
We define a useful notion for studying degenerations of line bundles as well as line bundles themselves.  
Consider a double cover $f: X\to T$. For any line bundle $L$ on $X$, the sheaf $f_*L$ is a locally free $\O_T$-module of rank $2$, and locally $f_*L$ is isomorphic to $f_* \O_X$ as an $f_* \O_X$-module.

\begin{definition}\label{defn:traceable}
Let $f: X\to T$ be a double cover and consider an $\O_X$-module $M$.  
We say that $M$ is \defi{traceable} (as in \cite{BinQuad}) if:
\begin{enumerate}
	\item[(i)]  The push-forward $f_*M$ is a locally free rank $2$ $\O_T$-module, and 
	\item[(ii)]  $f_* M$ and $f_* \O_X$ give the same trace map $f_* \O_X \ra \O_T$, i.e. the composite maps
\[
\xymatrix{
f_* \O_X \ar[rr]^-{\textrm{mult.}}&& \End_{\O_T} (f_* \O_X,f_* \O_X)\ar[rr]^-{\textrm{trace}}&&\O_T\\
f_* \O_X \ar[rr]^-{\textrm{mult.}}&& \End_{\O_T} (f_* M,f_* M)\ar[rr]^-{\textrm{trace}}&&\O_T
}
\]
agree.
\end{enumerate}
\end{definition}

Consider a double cover $f: C\to \P^1$.  Theorem~\ref{thm:traceable} shows that, when $C$ is smooth, $M$ is traceable if and only if $M$ is a line bundle.  Further, Theorem~\ref{thm:traceable} shows that that if $C$ is integral, then $M$ is traceable if and only if $M$ is a rank one torsion free sheaf. 

Our first main result is a description of a moduli stack $\Jbar^{2,g,n}$ which parametrizes double covering pairs $(f:C\to \P^1, M)$, where  $p_a(C)=g$ and $\deg(M):=\chi(M)-\chi(\O_C)=n$.  (See \S\ref{S:defJbar} for a more precise definition of this stack.) In Theorem~\ref{thm:main1}, we prove that $\Jbar^{2,g,n}$ is a connected Artin stack, and we provide an explicit unirational stratification of $\Jbar^{2,g,n}$ by quotient stacks $Q^{i,j,k}$, where each $Q^{i,j,k}$ is the quotient of a vector space by the product of general linear groups.  The unirational strata $Q^{i,j,k}$ correspond to certain types of linear binary quadratic forms for $\P^1$, and hence these strata provide concrete coordinates for working with $\Jbar^{2,g,n}$.

We then use these unirational strata to provide an explicit picture of $\Jbar^{2,g,n}$, exploring the strata in detail in  \S\ref{subsec:strata}.  We also use Theorem~\ref{thm:main2} to write concrete equations describing interesting subloci of $\Jbar^{2,g,n}$ (e.g. the locus where $C$ is an integral curve and $L$ is a line bundle.)

The stack $\Jbar^{2,g,n}$ is a bit unwieldy, however, as it may fail to be irreducible, and it is never quasi-compact.  To obtain a better approximation of the universal Jacobian $\UJac^{2,g,n}$, we thus introduce a bounded substack $\Jbd^{2,g,n}\subseteq \Jbar^{2,g,n}$ which satisfies the following theorem:
\setcounter{section}{5}
\setcounter{proposition}{0}
\begin{theorem}\label{thm:Jbounded0}
For $g\geq 0$, we have that $\Jbd^{2,g,n}$ is smooth and irreducible, and it contains  $\mathscr J^{2,g,n}$ as a dense open substack.  Further, $\Jbd^{2,g,n}$ is unirational, and thus
$\UJac^{2,g,n}$ is unirational.
\end{theorem}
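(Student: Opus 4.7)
The plan is to exploit the unirational stratification of $\Jbar^{2,g,n}$ by the quotient stacks $Q^{i,j,k}$ (each of which is the quotient of an affine space by a product of general linear groups) restricted to the bounded substack $\Jbd^{2,g,n}$. Because the boundedness defining $\Jbd^{2,g,n}$ caps the allowed splitting types of the relevant rank-$2$ bundles on $\P^1$, only finitely many strata can meet $\Jbd^{2,g,n}$, and each is already smooth, irreducible, and unirational. First I would single out a distinguished top stratum $Q^{i_0,j_0,k_0}$ corresponding to the most balanced splitting types of $f_*\O_C$ and $f_*M$; the locus of smooth hyperelliptic $C$ equipped with an honest line bundle sits naturally in this stratum, realizing $\UJac^{2,g,n}$ as an open substack of $Q^{i_0,j_0,k_0}$, and a dimension count inside $Q^{i_0,j_0,k_0}$ shows it is dense.

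Next I would establish irreducibility by showing that every other stratum in the bounded range lies in the closure of $Q^{i_0,j_0,k_0}$. Since rank-$2$ bundles on $\P^1$ of fixed degree form an irreducible family specializing from the balanced splitting type to all less balanced ones, I would produce explicit one-parameter degenerations inside the space of linear binary quadratic forms on $\P^1$ that carry a pair from the top stratum to any prescribed stratum of $\Jbd^{2,g,n}$, giving a chain of specializations that pins down irreducibility.

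The main obstacle is smoothness of $\Jbd^{2,g,n}$ as a stack, since stratumwise smoothness does not automatically suffice at points where the splitting type jumps. I would handle this via deformation theory of double covering pairs, using the equivalence with linear binary quadratic forms on $\P^1$ from~\cite{BinQuad}: a first-order deformation of a double covering pair $(f\colon C\to \P^1, M)$ corresponds to a first-order deformation of the associated linear binary quadratic form, which is a section of a coherent sheaf on $\P^1$, with obstructions controlled by an $H^1$ that vanishes thanks to the boundedness hypothesis. This gives unobstructed deformations, hence smoothness. Unirationality of $\Jbd^{2,g,n}$ then follows from irreducibility together with the rationality of the dense open stratum $Q^{i_0,j_0,k_0}$ (a quotient of affine space by a connected linear algebraic group), and the unirationality of $\UJac^{2,g,n}$ is immediate since it sits as a dense open substack of $\Jbd^{2,g,n}$.
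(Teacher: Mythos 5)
There is a genuine flaw in your treatment of the dense-open statement. You claim that the locus of smooth curves with a line bundle ``realizes $\UJac^{2,g,n}$ as an open substack of $Q^{i_0,j_0,k_0}$.'' This is false: the splitting type of $f_*M$ is not constant on $\UJac^{2,g,n}$. For example, with $n$ even and $M=f^*\O_{\P^1}(n/2)$ on a smooth hyperelliptic curve of genus $g\geq 1$, one gets $f_*M\cong \O(\tfrac n2-g-1)\oplus\O(\tfrac n2)$, which lies in $\Jbd^{2,g,n}$ (here $2i+k=0$) but is maximally unbalanced, hence far from the top stratum. So $\UJac^{2,g,n}$ meets many strata $Q^{i,j,k}$, and your ``dimension count inside $Q^{i_0,j_0,k_0}$'' does not address density in $\Jbd^{2,g,n}$. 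The statement can be repaired, but by a different route: openness of $\UJac^{2,g,n}$ in $\Jbar^{2,g,n}$ (Theorem~\ref{thm:main2}), the containment $\UJac^{2,g,n}\subseteq\Jbd^{2,g,n}$, nonemptiness, and irreducibility of $\Jbd^{2,g,n}$ together give density. Note that the containment itself requires an argument you never supply: one must check that smoothness (in fact integrality) of $C$ forces $2i+k\geq 0$, which follows because an integral $C$ has $\disc_p$ not a square, hence $a\not\equiv 0$, hence $\deg(\O(2i+k))\geq 0$. Relatedly, your irreducibility step hides the place where boundedness is essential: to carry an arbitrary form $p_0$ on an unbalanced bundle along a degeneration from the balanced bundle, you need the spaces $H^0(\P^1,\Sym^2 V_t\otimes\O(k))$ to form a vector bundle over the family, i.e.\ $H^1(\Sym^2 V\otimes\O(k))=0$, which is exactly the condition $2i+k\geq 0$. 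Without it the claim fails --- indeed in $\Jbar^{2,g,n}$ the analogous density is false (Remark~\ref{rmk:notdense}) --- so this vanishing must be invoked in the degeneration argument, not only in the smoothness step.

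For comparison, the paper gets smoothness and irreducibility simultaneously by proving $\Jbd^{2,g,n}\cong[\mathbb V(\mathcal F^*)/\G_m]$, where $\mathcal F$ is the pushforward of $\Sym^2(\mathcal V)\otimes\O(k)$ from $\mathscr U^{2,d}_k\times\P^1$; the same $H^1$-vanishing coming from global generation makes cohomology commute with base change, so $\mathcal F$ is a vector bundle over the smooth irreducible stack $\mathscr U^{2,d}_k$, and density of $\UJac^{2,g,n}$ then follows from irreducibility, openness, and $\mathscr J^{2,g,n}_{\text{int}}\subseteq\Jbd^{2,g,n}$. Your stratumwise-plus-deformation-theory plan rests on the same cohomological input and could be made to work, but as written the density step relies on a false containment and the irreducibility step leaves the crucial section-extension unproved.
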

\setcounter{section}{1}
\setcounter{proposition}{2}
\noindent The stack $\Jbd^{2,g,n}$ is defined in Definition~\ref{defn:Jbd} by imposing a boundedness condition, depending on $g$ and $n$, on the splitting type of the rank two vector bundle $f_*(\O_C)$ in the double covering pair $(f: C\to \P^1, M)$.  This boundedness condition is only nontrivial when $C$ fails to be integral; i.e., if $C$ is integral, then every double covering pair $(f: C\to \P^1, M)$ induces a point of $\Jbd^{2,g,n}$.

Our construction of $\Jbd^{2,g,n}$ has some implications for the study of compactfied Jacobians of curves $C$ that admit a double cover of $\P^1$.  Namely, if we fix a double cover $f: C\to \P^1$, this induces a point $c$ of $\text{Hur}^{2,g}$, and we can consider the fiber product $c \times_{\text{Hur}^{2,g}} \Jbd^{2,g,n}$.  This fiber product parametrizes certain traceable modules over $C$ and provides a notion of a compactified Jacobian for $C$.  
There is a large literature containing different approaches to compactifying Jacobians of singular curves~\cite{altman-kleiman, caporaso1, jesse1, d'souza, esteves, ishida, jarvis, melo, oda-seshadri}, and in \S\ref{sec:comparison} we compare these fiber products to other constructions of compactified Jacobians.  As mentioned above, when $C$ is integral, this fiber product equals the 
stack of rank $1$ torsion free sheaves on $C$, and thus this coincides with previous work
on integral curves like~\cite{altman-kleiman,d'souza}.  When $C$ is reducible or nonreduced, we compare $c \times_{\text{Hur}^{2,g}} \Jbd^{2,g,n}$ to the constructions of~\cite{caporaso1,dawei-jesse,melo}.

Finally, in Theorem~\ref{thm:PicJ}, we compute the Picard groups of $\UJac^{2,g,n}, \Jbd^{2,g,n}$ and related moduli stacks, in the case where $n-g$ is even.  In particular, we show that 
\[
\Pic(\UJac^{2,g,n})=\Z^2\oplus \Z/(8g+4)\Z
\]
whenever $n-g$ is even.  
Note that one of the free summands comes from the fact that $\UJac^{2,g,n}$ is a $\G_m$-gerbe.

\subsection*{Acknowledgments}
We are grateful to Jarod Alper, Manjul Bhargava, Michele Bolognesi, David Eisenbud, Wei Ho, Martin Olsson, Matthew Satriano, Gregory G.\ Smith, and Ravi Vakil for useful conversations.  We also thank Jesse Leo Kass for many discussions and for his comments on an earlier draft.

\section{Background and setup}\label{sec:defs}
The purpose of this section is to establish our framework for studying $\UJac^{2,g,n}$ and related stacks.  We work throughout over an arbitrary field $\kk$ of characteristic not equal to $2$, and we often use $K$ to denote an arbitrary extension of $\kk$.
We first define the moduli stacks that we are interested in studying and compactifying.
For $g\geq 0$, we define the Hurwitz stack $\text{Hur}^{2,g}$, where $\text{Hur}^{2,g}(T)$ is the groupoid with objects
\[
\text{Hur}^{2,g}(T)=\{ f: C\to \P^1_T | f \text{ is finite, flat, degree } 2 \text{ and } C\in \mathcal M_g(T)\},
\]
where $\mathcal M_g$ is the moduli space of smooth genus $g$ curves.
A morphism $(f: C\to \P^1_T)\to (f' : C'\to \P^1_T)$ is an isomorphism $\iota: C\to C'$ such that $f'\circ \iota = f$.  
Note that a morphism $f: C \ra S$ is finite, flat, and  degree $n$ exactly when $C$ is $\underline{\Spec}$ of a locally free rank $n$ $\OS$-module (with an $\OS$-algebra structure).
We sometimes omit the map $f$ from the notation, and simply refer to a point $C\in \text{Hur}^{2,g}(T)$.

For $g\geq 0$, we define the universal Jacobian $\mathscr J^{2,g,n}$ of the Hurwitz stack as the stack where $\mathscr J^{2,g,n}(T)$ is the groupoid with objects
\[
\mathscr J^{2,g,n}(T)= \{ (f:C\to \P^1_T,M) | (C,f)\in \text{Hur}^{2,g}(T), M\in \Pic^n(C) \}.
\]
A morphism $(f: C\to \P^1_T,M)\to (f':C'\to \P^1_T,M')$ is a pair $(\iota, \nu)$ where $\iota: C\to C'$ is an isomorphism such that $f'\circ \iota=f$, and where $\nu$ is an isomorphism between $\iota^* M'$ and $M$.

\subsection{Compactifying $\text{Hur}^{2,g}$ and definition of $\Hbar$}\label{subsec:Hbar}
One much studied compactification of the Hurwitz scheme uses the moduli space of admissible covers (see~\cite[\S3.G]{harris-morrison}, \cite{HM82}, or \cite{mochizuki}).  However, if one wants to take a limit of a family of double covers of $\P^1$, this can be done by simply by considering the corresponding homogeneous form of degree $2g+2$ over $\P^1$.  The explicitness of this limit is useful, and we will provide a similar explicitness for limits of line bundles of double covers.

We thus consider the stack $\Hbar^{2,g}$ where $\Hbar^{2,g}(T)$ is the groupoid whose objects are double covers $(f: C\to \P^1_T)$ where $p_a(C_t)=g$ for all $t\in T$.  A morphism $(f: C\to \P^1_T)\to (f': C'\to \P^1_T)$ is given by an isomorphism $\iota: C\to C'$ such that $f=f'\circ \iota$.  
 We may also view $\Hbar^{2,g}$ as the stack where $\Hbar^{2,g}(T)$ is the groupoid whose objects consist of pairs $(L,\sigma)$ with $L\in \Pic(\P^1_T)$ such that $L|_t\cong \O(g+1)$ for all $t\in T$, and $\sigma\in H^0(\P^1_T,L^{\otimes 2})$.  A morphism  $(L,\sigma)\to (L',\sigma')$ is an isomorphism $\iota: L\to L'$ such that $\iota^*(\sigma')=\sigma$.
This second description follows from the fact that, in characteristic $\ne 2$, a double cover $C\to \P^1_T$ is equivalent to a pair $(L,\sigma)$ where $L\in \Pic(\P^1_T)$ and $\sigma$ is a global section of $L^{\otimes 2}$.  See the Appendix \S\ref{S:DoubleCovers} for more details (and also see \cite[\S 2]{AV04} and \cite[Definition 6.1]{RW06} which treat reduced cyclic covers of all degrees.).

From the second description, we see that 
$\Hbar^{2,g}$ is the global quotient stack $[H^0(\P^1, \O(2g+2))/\G_m]$, where the action of $\mathbb G_m$ on $H^0(\P^1, \O(2g+2))$ is induced by the isomorphism $\G_m\cong \Aut(\O(g+1))$
(c.f. \cite[\S 4]{AV04}).  
Based on the global quotient description, we see that $\Hbar^{2,g}$ is a smooth, irreducible Artin stack of dimension $2g+2$.  When $g\geq 0$, the Hurwitz stack $\text{Hur}^{2,g}$ is a dense open substack of $\Hbar^{2,g}$ defined by the condition that the discriminant of $\sigma$ (which is a global section of a line bundle on $\P^1$ and thus a binary form) is nonzero.  As an example, the origin in $H^0(\P^1, \O(2g+2))$
corresponds to a non-reduced double cover of $\P^1$, supported on a reduced curve isomorphic to $\P^1$ but with a thickening that gives it arithmetic genus $g$.

\subsection{Background on $\mathscr W_{2,1}$}
In order to take limits of families of double covering pairs, we allow the line bundle $M$ to degenerate to a traceable $\O_C$-module.  (Recall from  Defn.~\ref{defn:linbinquad} the definition of a traceable module.)
\begin{definition}\label{defn:double-covering-pair}
A \defi{double covering pair} of $S$ is a pair $(f: X\to S, M)$ where $f: X\to S$ is a double cover and where $M$ is a traceable $\O_X$-module.
An isomorphism $(f: X\to S,M)\to (f':X'\to S,M')$ is a pair $(\iota, \nu)$ where $\iota: X\to X'$ is an isomorphism such that $f'\circ \iota=f$, and where $\nu$ is an isomorphism between $\iota^* M'$ and $M$.
\end{definition}
In \S\ref{S:defJbar}, we introduce a moduli stack which parametrizes double covering pairs of $\P^1$.  For each $g$ and $n$, this allows us to study universally closed stacks which contain $\mathscr J^{2,g,n}$.  As noted in the introduction, our main constructions are related to a geometric version of Gauss composition for $\P^1$.  Hence, the following definition is essential to our approach.

\begin{definition}\label{defn:linbinquad}
A \defi{linear binary quadratic form} on $S$ is a triple $(V,L,p)$, where $V$ is a rank $2$-bundle vector bundle on $S$, $L$ is a line bundle on $S$, and $p\in H^0(S,\Sym^2 V \otimes L)$.   An isomorphism from $(V,L,p)\to (V',L',p')$ is given by isomophisms $\iota: V\isom V', \nu:L\isom L'$ 
 such that the induced map $\Sym^2(\iota)\otimes \nu: \Sym^2(V)\otimes L \to \Sym^2(V')\otimes L'$ sends $p$ to $p'$.
\end{definition}

We now define the moduli stack $\mathscr W_{2,1}$ by setting $\mathscr W_{2,1}(S)$ to be the groupoid whose objects consist of double covering pairs of $S$.  This is the central object of study in \cite{BinQuad}.  From \cite[Thms.~1.4 and 2.1]{BinQuad},  we know that $\mathscr W_{2,1}$ may also be viewed as the stack where $\mathscr W_{2,1}(S)$ is the groupoid whose objects are linear binary quadratic forms on $S$.  From this description, we see that $\mathscr W_{2,1}$ is naturally isomorphic to the global quotient stack $[\A^3/\GL_2\times\G_m]$ where $\GL_2$ acts on $\A^3$ by the second symmetric power of the standard representation of $\GL_2$, and $\G_m$ acts by scalar multiplication.

The correspondence between double covering pairs and linear binary quadratic forms is as follows: given a double covering pair $(f: X\to S, M)$ and a corresponding $(V,L,p)$, we have $f_*M\isom V $ as $\OS$-modules and
$$
f_*\O_X/\OS \isom \wedge^2 V^* \tesnor L^*
$$
as \OS-modules.  (Note this determines $V$ and $L$ from the double covering pair.)
We have that $p\in H^0(S,\Sym^2 V \otimes L)$
is given by the map
\begin{equation*}
 \map{L^*=f_*\O_X/\OS \tensor \wedge^2 f_*M}{ \Sym^2 f_*M=\Sym^2 V}{\gamma\tensor m_1 \wedge m_2}{\gamma m_2\tensor m_1-\gamma m_1\tensor m_2}.
\end{equation*}
Conversely, let $\P(V):=\underline{\Proj} \left(\Sym^* V\right)$ and
 $\pi : \P(V) \ra S$.
We can construct
\begin{equation*}
f_* \O_X:=H^0R \pi_* \left(\O(-2)\tensor \pi^*L^* \stackrel{p}{\ra} \O\right),
\end{equation*}
and 
\begin{equation*}
f_*M:=H^0R \pi_* \left(\O(-1)\tensor \pi^*L^* \stackrel{p}{\ra} \O(1)\right).
\end{equation*}
Above we are taking the hypercohomology of complexes on $\P(V)$ with terms in degrees -1 and 0, and the algebra structure on the Koszul complex gives
$f_* \O_X$ an $\OS$-algebra structure and $f_*M$ and $\O_X$-module structure.  See \cite[\S 3]{BinQuad} for more details.

Locally on an open of $S$ where $V$ and $L$ are free (generated by $x,y$ and $z$ respectively), we have that $f_*\O_X$ is a free $\OS$-module generated by $1$ and $\tau$,
and $M$ is a free $\OS$-module generated by $x$ and $y$.  If $p=(ax^2+bxy+cy^2)z$, then 
the algebra and module structures are as follows:
\begin{equation*}
\tau^2=-b\tau-ac \qquad \qquad\tau x=-cy-bx\qquad\qquad\tau y=ax.
\end{equation*}
Alternately, since we are working over a field of characteristic not equal to $2$, we could change coordinates by $\tau'=2\tau+b$.  After clearing denominators, this yields
\begin{equation}\label{eqn:mult2}
(\tau')^2=b^2-4ac\qquad \qquad\tau' x=-2cy-bx\qquad\qquad\tau' y=2ax+by.
\end{equation}

\begin{proposition}\label{P:cutout}
In the above construction, if $p_s\not= 0$ for every $s\in S$, then $X$ is the scheme cut out by $p$ in $\P(V)$
 and $M$ is the pull-back of $\O(1)$ on $\P(V)$ to $X$.  
\end{proposition}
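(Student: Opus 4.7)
The plan is to observe that the hypothesis $p_s \neq 0$ on every fiber forces the two-term Koszul complexes appearing in the definitions of $f_*\O_X$ and $f_*M$ to be quasi-isomorphic to sheaves scheme-theoretically supported on the zero locus of $p$; once this is in hand, the identifications with $Z := V(p) \subset \P(V)$ and with the pullback of $\O(1)$ are essentially formal.

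More precisely, since $p_s \ne 0$ for every $s$, the subscheme $Z \subset \P(V)$ cut out by $p$ meets each fiber of $\pi$ in a length-two zero-dimensional subscheme of $\P^1$, so $Z$ is a relative effective Cartier divisor over $S$. In particular, the sheaf map $p: \O_{\P(V)}(-2) \otimes \pi^*L^* \to \O_{\P(V)}$ is injective and fits into a short exact sequence
\[
0 \to \O_{\P(V)}(-2) \otimes \pi^*L^* \xrightarrow{p} \O_{\P(V)} \to i_*\O_Z \to 0,
\]
where $i: Z \hookrightarrow \P(V)$ is the closed immersion; twisting by $\O(1)$ gives the analogous resolution of $i_*(i^*\O(1))$. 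Applying $R\pi_*$, the hypercohomology computations defining $f_*\O_X$ and $f_*M$ collapse to $f_*\O_X \cong \pi_*\O_Z$ and $f_*M \cong \pi_*(i^*\O(1))$. Because $\pi|_Z : Z \to S$ is proper and quasi-finite it is finite, so $Z = \underline{\Spec}_S(\pi_*\O_Z)$, and combining with the definition $X = \underline{\Spec}_S(f_*\O_X)$ produces a canonical isomorphism $X \cong Z$ of $S$-schemes, under which $M$ is identified with $i^*\O(1)$ via the affine-morphism equivalence applied to $f_*M \cong f_*(i^*\O(1))$.

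The main obstacle I would expect is the bookkeeping needed to check that the $\O_S$-algebra structure on $f_*\O_X$ coming from the DG-algebra structure of the Koszul complex in \cite{BinQuad} matches the standard $\O_S$-algebra structure on $\pi_*\O_Z$, and likewise that the $f_*\O_X$-module structure on $f_*M$ matches the $\pi_*\O_Z$-module structure on $\pi_*(i^*\O(1))$. This is a general fact about the Koszul resolution of a regular element being a DG-algebra resolution of the quotient, with its natural twists giving DG-module resolutions of the twisting line bundles, but one has to trace the construction in \cite{BinQuad} to confirm it aligns with the standard identification on the nose rather than only up to some non-trivial automorphism.
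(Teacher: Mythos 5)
Your proposal is correct and follows essentially the same route as the paper: use the hypothesis to see that $p$ is a non-zerodivisor so the Koszul complexes collapse, identify $f_*\O_X$ and $f_*M$ with $\pi_*(\O_{Z})$ and $\pi_*(\O_Z(1))$ for $Z=V(p)$, and then use that $Z\to S$ is quasi-finite and projective, hence finite and affine, to conclude $X\cong Z$ and $M\cong i^*\O(1)$. The compatibility of algebra/module structures that you flag is indeed the standard Koszul DG-algebra fact and is left implicit in the paper as well.
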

\begin{proof}
In particular, we have that $p$ is not a zero divisor and thus the complexes $\O(-2)\tensor \pi^*L^* \stackrel{p}{\ra} \O$ and 
$\O(-1)\tensor \pi^*L^* \stackrel{p}{\ra} \O(1)$ used to define $X$ and $M$ consist of injective maps.  Thus
$f_* \O_X=\pi_* \left(  \O/ p(\O(-2)\tensor \pi^*L^*) \right)$ and $f_*M=\pi_* \left(  \O(1)/ p(\O(-1)\tensor \pi^*L^*) \right)$.
 Let $X_p$ be the scheme cut out by $p$ in $\P(V)$.  Then we have $f_* \O_X=\pi_* (\O_{X_p})$ and $f_*M=\pi_* (\O_{X_p}(1))$.
If $p$ is non-zero in every fiber, then in each fiber, $X_p \ra S$ is a double cover, and hence the map is quasi-finite.
Since the map $X_p\ra S$ is also projective, it is finite and thus affine~\cite[Book 4, Chapter 18, \S 12]{EGAIV} as well as flat.
Thus $X=X_p$ and the proposition follows.
\end{proof}

\subsection{Double covering pairs of $\P^1$ and the definition of $\Jbar^{2,g,n}$}\label{S:defJbar}
The description of $\mathscr W_{2,1}$ in terms of linear binary quadratic forms enables us to provide explicit descriptions of all double covering pairs of $\P^1$.  
A double covering pair $(C,M)$ of $\P^1$ has two deformation invariants.  In the $(C,M)$ coordinates, these invariants correspond to the arithmetic genus of $C$ and the degree of $M$; in the $(V,L,p)$ coordinates, these invariants correspond to the degree of $V$ and the degree of $L$.  

Fix some field $K$ over $\kk$.  A map $\P^1_K\to \mathscr W_{2,1}$ is equivalent to a linear binary quadratic form $(V,L,p)$ on $\P^1_K$.  By Grothendieck's Theorem, every vector bundle on $\P^1_K$ splits as a direct sum of line bundles~\cite[Thm.~4.1]{hazewinkel-clyde}, and we may thus write $V\cong \O(i)\oplus \O(j)$ for some $i,j$, and we may write $L\cong \O(k)$ for some $k$.
The associated
double covering pair is $f : C \ra \P^1_K$ with $f_* \O_C/\O_{\P^1_K}\isom \O(-i-j-k)$ and a traceable $\O_C$-module $M$ such that $f_*M\isom \O(i)\oplus \O(j)$. 
Thus we have that
\[
\chi(C,\O_C)=\chi(\P^1_K,f_* \O_C)=\chi(\P^1_K,\O_{\P^1_K})+\chi(\P^1_K,f_* \O_C/\O_{\P^1_K})=2-i-j-k,
\]
and thus $C$ has arithmetic genus $i+j+k-1$, which equals $\deg(V)+\deg(L)-1$.  
Similarly, we have
\[
\deg(M):=\chi(C,M)-\chi(C,\O_C)=\chi(\P^1_K,f_* M)=2i+2j+k.
\]

These deformation invariants lead us to introduce unirational quotient stacks:
\begin{definition}\label{defn:Qijk}
For a vector bundle $V$ on $\P^1$, we write $\GL(V)$ for the algebraic group over $\kk$ of automorphisms of that vector bundle.  For example, if $V$ is free of rank $n$, then
$\GL(V)\isom \GL_n(\kk).$ 
For fixed integers $i,j,k\in \mathbb Z$ with $i\leq j$, we define the quotient stacks
\[Q^{i,j,k}:=[ V^{i,j,k} /GL(\mathcal O(i)\oplus \mathcal O(j))\times GL(\mathcal O(k))]\]
where $V^{i,j,k}$ is the affine space $V^{i,j,k}:=H^0\left( \P^1, \Sym^2(\mathcal O(i)\oplus \mathcal O(j))\otimes \mathcal O(k) \right)$.  Note that these groups are not necessarily reductive.
\end{definition}
The above observations yield a set-theoretic decomposition of $\mathscr W_{2,1}(\P^1)$:
\begin{equation}\label{eqn:decomp}
\mathscr W_{2,1}(\P^1_K)=\bigsqcup_{k}\bigsqcup_{i\leq j} Q^{i,j,k}(K).
\end{equation}
If we consider a family $\P^1\times T\to \mathscr W_{2,1},$ then the double covering pairs corresponding to two different points of $T$ might come from different $Q^{i,j,k}$-strata, even if $T$ is connected.  Hence, to study our motivating question, it is natural to consider how to patch these quotient stacks $Q^{i,j,k}$ together.

The Hom-stack $\Hom(\P^1, \mathscr W_{2,1})$ provides a useful answer to this patching question.  
We use \cite[Thm.\ 2.1]{BinQuad} to give two different descriptions of $\Hom(\P^1, \mathscr W_{2,1})$. 
The $T$ points of $\Hom(\P^1, \mathscr W_{2,1})$ can be viewed either as
double covering pairs of $\P^1_T$ or as linear binary quadratic forms on $\P^1_T$.
\begin{definition}
We define $\Jbar^{2,g,n}\subseteq \Hom(\P^1, \mathscr W_{2,1})$ as the substack consisting of triples $(V,L,p)$ such that $\deg(V)+\deg(L)-1=g$ and $2\deg(V)+\deg(L)=n$.
\end{definition}
Since $\deg(V)$ and $\deg(L)$ are deformation invariants, it follows that $\Jbar^{2,g,n}$ is an open and closed substack of $\Hom(\P^1,\mathscr W_{2,1})$.
From the computations above, we see that
$$\mathscr J^{2,g,n}\sub \Jbar^{2,g,n}.$$
Also, if we set $g:=i+j+k-1$ and $n:=2i+2j+k$, then we have a natural map
$$
Q^{i,j,k} \ra \Jbar^{2,g,n}
$$
for any $i,j,k$.

The $K$-points of $\Jbar^{2,g,n}$ thus parametrize either:
\begin{enumerate}
\item  \underline{Linear binary quadratic forms on $\P^1_K$:}  Triples $(V,L,p)$ where $V=\O(i)\oplus \O(j), L=\O(k), p\in H^0(\P^1, \Sym^2 V\otimes L)$, and $i,j$ and $k$ satisfy $i+j+k-1=g, 2i+2j+k=n$, or
	\item \underline{Double covering pairs of $\P^1_K$:} Pairs $(f: C\to \P^1_K,M)$, where $p_a(C)=g$, and $M$ is a traceable $\O_{C}$-module with $\deg(M)=n$.
\end{enumerate}

\begin{remark}\label{rmk:unboundedness}
The stack $\Jbar^{2,g,n}$ fails to be quasi-compact, due to unboundedness issues.  For instance, set  $d:=n-g-1$ and $k:=2g-n+2$.  For any $i\leq \frac{d}{2}$, the topological space $|\Jbar^{2,g,n}|$ (as defined in \cite[\S5]{lmb}) contains a point $P_i$ corresponding to the equivalence class of the triple $(\O(i)\oplus \O(d-i), \O(k), 0)$, for which the corresponding double cover is non-reduced.

Every point of $|Q^{i,d-i,k}|$ specializes to $P_i$.  Further, if we fix any $i,i'\leq \frac{d}{2}$, then the point  $P_i$ specializes to $P_{i'}$ if and only if $i'\leq i$.  For instance, if $d=4$, then
\[
P_2\leadsto P_1\leadsto P_0\leadsto P_{-1}\leadsto  \dots
\]
where the arrow indicates that specialization.  Since this chain of specialization is unbounded, it follows that $|\Jbar^{2,g,n}|$ is not quasi-compact. In fact, the space $|\Jbar^{2,g,n}|$ has no closed points.  In \S\ref{sec:Jbd}, we construct an open substack of $\Jbar^{2,g,n}$ which resolves this unboundedness issue.
\end{remark}

\subsection{The discriminant of a linear binary quadratic form}\label{subsec:discriminant}
In this section, we review the definition of the discriminant of a linear binary quadratic form, as this definition is useful in detecting properties of our double covering pair.

\begin{definition}
Any linear binary quadratic form $(V,L,p)$ over a scheme $S$ has a discriminant $\disc_p\in H^0(S,(\wedge^2 V \tensor L)^{\ts})$ defined by local coordinates \cite[\S 1]{BinQuad}.
If $K$ is a field extension of $\kk$ and $S=\P^1_K$, then since $V$ splits as a sum of line bundles,
 we can give $\disc_p$ by a global formula.
Let $V=\O(i)x\oplus\O(j)y$ and $L=\O(k)$ be bundles over $\P^1_K$, and write
\begin{equation}\label{E:writep}
p=ax^2+bxy+cy^2,
\end{equation}
where $a\in H^0(\P^1,\O(2i+k)), b\in H^0(\P^1,\O(i+j+k)), c\in H^0(\P^1,\O(2j+k))$.
Then the \defi{discriminant section} of $p$ is
\[
\text{disc}_p:=b^2-4ac\in H^0\left(\P^1, \O(2i+2j+2k)\right).
\]
Since $\text{disc}_p$ is a binary form of degree $2i+2j+2k$, it has a discriminant itself, which is an element of $K$. We denote this by $\text{disc} \left(\text{disc}_p\right)$.
\end{definition}

In general, a finite cover $\pi: X\ra Y$ (in which $\pi_* \O_X$ is a locally free rank $n$ $\O_Y$-module)
has a \defi{discriminant} given 
as a global section $(\wedge^n \pi_* \O_X)^{\tensor -2}\isom (\wedge^{n-1} \pi_* \O_X/\O_Y)^{\tensor -2}$
by the determinant of the trace form $\pi_* \O_X \tensor \pi_* \O_X \ra \O_Y$.
By \cite[Theorem 1.6]{BinQuad}, we have that $\disc_p$ is the discriminant section of the double cover corresponding to $p$.

Each geometric fiber of $f: C \ra \P^1_K$ is either two smooth points or $\Spec(\bar{K}[\epsilon]/\epsilon^2)$.  
The zero locus of the discriminant section gives a subscheme of $\P^1_K$ which tells us in which fibers $f$ is not \'{e}tale.
In other words, the geometric fiber over a point $q\in \P^1_K$ is a double point if and only if the discriminant is zero at $q$.

There is a natural map $\Jbar^{2,g,n}\to \Hbar^{2,g}$ which ``forgets'' the traceable module.  On the level of covering pairs, this map simply sends $(f: C\to \P^1, M)\mapsto (f: C\to \P^1)$.  In terms of linear binary quadratic forms, we have
\begin{equation}\label{eqn:forget}
(V,L,p)\mapsto (\wedge^2 V \tensor L, \disc_p),
\end{equation}
since $\disc_p\in H^0(\P^1, (\wedge^2 V \tensor L)^{\ts})$.

\subsection{A global toric construction}\label{subsec:toric}
In this section, we give a different construction of the correspondence $(f:C\to \P^1, M)\leftrightarrow (V,L,p)$, using the language of toric geometry.   This is useful for producing examples, and it provides a straightforward way to compute the bidegree of $M$ in the case that $C$ is a reducible curve (see Example~\ref{ex:snarl}).

Let $K$ be an algebraically closed field extension of $\kk$, and let $(f: C\to \P^1_K, M)$ be a double covering pair
of $\P^1_K$.  (We need the algebraically closed assumption in order to apply standard results from toric geometry.)  Let the corresponding linear binary quadratic form be $(\O(i)\oplus \O(j),\O(k),p)$, and assume that $p$ is nonzero in every fiber.  Let $X$ be the Hirzebruch surface $X:=\P(\O(i)\oplus \O(j))$ over $K$ with the projective bundle map $\pi: X\to \P^1_K$.  Then, by Proposition~\ref{P:cutout}, there exists a closed immersion $\iota: C\to X$ such that $\iota^* \O_{\pi}(1)\cong M$, where $\O_\pi(1)$ is the relative $\O(1)$ for $\pi$.  In particular, $M$ is a line bundle.  Since $\pi: X\to \P^1_K$ is a map of toric varieties, we may use toric methods to study such double covering pairs explicitly.

We first realize $X$ as a toric variety via the complete fan in $\mathbb R^2$ spanned by the four rays $\rho_s:=(1,-i), \rho_t:=(-1,j), \rho_x:=(0,-1),$ and $\rho_y:=(0,1)$.  
\[
\begin{tikzpicture}[scale=0.5]
\draw[->,thick] (0,0)--(1,-2);
\draw[->,thick] (0,0)--(-1.5,3.5);
\draw[->,thick] (0,0)--(0,-2);
\draw[->,thick] (0,0)--(0,2);
\draw (1.4,-2) node {$\rho_s$};
\draw (-2,3.5) node {$\rho_t$};
\draw (-.5,-2) node {$\rho_x$};
\draw (.5,2) node {$\rho_y$};
\draw[dashed,-,thin](4,0)--(-4,0);
\draw[dashed,-,thin](0,4)--(0,-4);
\end{tikzpicture}
\]
The toric map $\pi: X\to \P^1_K$ is defined by projection onto the first coordinate.  

Let $D_s, D_t, D_x,$ and $D_y$ be the divisors corresponding to the four rays.  The Picard group of $X$ is $\Z^2$, and we choose $D_{\text{hor}}:=D_x-jD_s$ and $D_{\text{vert}}:=D_s$ as our ordered basis of $\Pic(X)$.  The intersection pairing on $\Pic(X)$ is given by $D_{\text{vert}}^2=0, D_{\text{vert}}\cdot D_{\text{hor}}=1$ and $D_{\text{hor}}^2=-i-j$~\cite[\S5.2]{fulton-toric}.

The Cox ring of $\P^1_K$ is $K[s,t]$ with the usual grading, and the Cox ring of $X$ is then $R:=K[s,t,x,y]$ with the $\mathbb Z^2$-grading given by $\deg(s)=\deg(t)=(0,1), \deg(x)=(1,j)$ and $\deg(y)=(1,i)$\cite[Defn.~1]{laface-velasco}.  Ideals in $R$ induce closed subschemes of $X$~\cite[Thm.\ 2.2]{laface-velasco}.  For instance, if $f\in R$ is a polynomial of bi-degree $(a,b)$ then the vanishing of $f$ defines a curve in $X$ which is linearly equivalent to the divisor $aD_{\text{hor}}+bD_{\text{vert}}$.  A direct computation in local coordinates further yields that the divisor class of $\O_{\pi}(1)$ corresponds to $D_{\text{hor}}+(i+j)D_{\text{vert}}$.

We may use the Cox ring of $X$ to explicitly define and compute with double covering pairs of $\P^1$.  Recall that the linear binary quadratic form $p$ defines our double covering pair $(C,M)$.  In Equation~\eqref{E:writep}, we have used $x$ and $y$ formally in $\O(i)\oplus \O(j)=\O(i)x\oplus \O(j)y$, and we have written
\[
p=ax^2+bxy+cy^2,
\]
where $a,b,c$ are polynomials in $K[s,t]$ of degrees $2i+k, i+j+k$ and $2j+k$, respectively.  Continuing with this notation, we may also think of $p$ as a bihomgeneous polynomial of degree $(2,2i+2j+k)$ in $R$.  Proposition~\ref{P:cutout} illustrates that the scheme cut out by $p$ in $X$ equals our curve $C$; since $p$ has bi-degree $(2,2i+2j+k)$ in $R$, we conclude that the divisor $C\subseteq X$ is linearly equivalent to the divisor $2D_{\text{hor}}+(2i+2j+k)D_{\text{vert}}$.

As an example of the kind of computation this allows, we can compute the degree of $M$ as the degree of the pullback of $\O_{\pi}(1)$ to $C$, which is given by the intersection pairing computation
\begin{align*}
\deg M&=\deg(\O_{\pi}(1)|_{C})\\
&=C\cdot \left ( D_{\text{hor}}+(i+j)D_{\text{vert}}\right)\\
&=\left(2D_{\text{hor}}+(2i+2j+k)D_{\text{vert}} \right)\cdot \left ( D_{\text{hor}}+(i+j)D_{\text{vert}}\right)\\
&=2D_{\text{hor}}^2+(4i+4j+k)D_{\text{hor}}\cdot D_{\text{vert}}+(2i+2j+k)(i+j)D_{\text{vert}}^2\\
&=2i+2j+k.
\end{align*}
Further, if $C$ is reducible, we can use this construction to compute the restriction of $\O_{\pi}(1)$ to each component of $C$, as in Example~\ref{ex:snarl} below.

\begin{example}\label{ex:toric}
Consider the Hirzebruch surface $X:=\P(\O x\oplus \O(3)y)$ with $\pi: X\to \P^1_K$.  Let $R$ be the Cox ring of $X$, as defined above.  The polynomial $sx^2+(s^4+t^4)xy+(st^6+t^7)y^2\in R$ defines a genus 3 curve $C\subseteq X$.  Since $C$ has no vertical fibers, the restriction of $\pi$ to $C$ is a flat, finite $2$-$1$ map, and the pullback of $\O_{\pi}(1)$ along the immersion $\iota: C\to X$ yields a degree 7 line bundle on $C$.  The double covering pair $(\pi: C\to \P^1_K, \iota^* \O_{\pi}(1))$ corresponds to the linear binary quadratic form $(\O x\oplus \O(3)y, \O(1), p)$ where $p=sx^2+(s^4+t^4)xy+(st^6+t^7)y^2$.
\end{example}

\begin{example}\label{ex:snarl}
Let $i=-1, j=6$ and $k=0$ and consider the point corresponding to
\[
p=s^5xy+g(s,t)y^2\in H^0(\P^1, \O(-2)x^2\oplus \O(5)xy\oplus \O(12)y^2).
\]
Then $\disc_p=s^{10}$.  Our double cover $\pi: C\to \P^1$ is two copies of $\P^1$ glued over the origin.  Let $o$ be the origin and $c=\pi^{-1}(o)$.  The singularity over the origin is:
\[
\O_{C,c}\cong \O_{\P^1,o}[x]/(x^2-s^{10})\cong \kk[x,s]/(x^2-s^{10}).
\]
If we assume that $g(s,t)$ is not divisible by $s$, then $p$ is nonzero in all fibers.  The curve $C\subseteq X:=\P(\O(-1)x\oplus \O(6)y)$ is given by the polynomial $s^5xy+g(s,t)y^2\in \kk[s,t,x,y]$.  This factors as $y(s^5x+g(s,t)y)$.  By the main computation in the proof of Theorem~\ref{thm:limits}, we see that $M$ has bi-degree $(-1,11)$.
\end{example}

 \begin{figure}
\begin{center}
\begin{tikzpicture}[scale=1.0]
\draw (-4.5,1) node {$C$};
\draw[<->,thick] (-4,1).. controls (-2,-.34) and (2,-.34) .. (4,1) ;
\draw (0,0) node {$\bullet$};
\draw[<->,thick] (-4,-1).. controls (-2,.34) and (2,.34) .. (4,-1);
\draw[->,thick] (0,-1)--(0,-2);
\draw (.3,-1.5) node  {$\pi$};
\draw[<->,thick] (-4,-2.75)--(4,-2.75);
\draw  (0,-2.75) node {$\bullet$};
\draw (-4.5,-2.75) node {$\P^1$};
\draw (.25,-2.5) node  {$o$};
\draw (.25,0.5) node  {$\pi^{-1}(o)$};
\draw (3,.5) node {$\circ$};
\draw (2.7,.7) node {$11$};
\draw (3,-.5) node {$\circ$};
\draw (2.7,-.7) node {$-1$};
\draw (3,0) node {$M$};
\end{tikzpicture}
\end{center}
\caption{The double covering pair from Example~\ref{ex:snarl}.}
\end{figure}
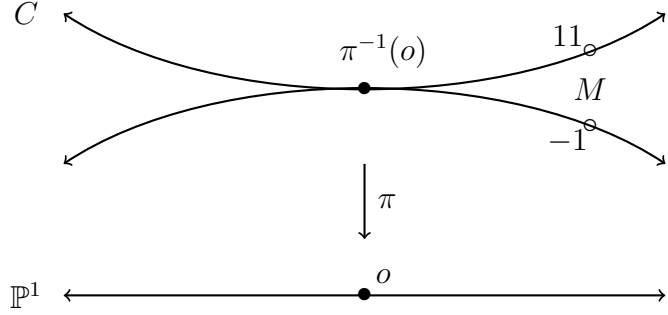

\section{Properties of $\Jbar^{2,g,n}$}\label{sec:Jbar}

\begin{theorem}\label{thm:main1}
The stack $\Jbar^{2,g,n}$ has the following properties:
\begin{enumerate}
	\item  $\Jbar^{2,g,n}$ is a connected Artin stack.
	\item  When $g\geq 0$, it contains $\mathscr J^{2,g,n}$ as an open substack.  
	\item  For any integers $g$ and $n$, there is a map of stacks
	\[
	\bigsqcup_{\substack{i\leq j, k\\ i+j+k-1=g\\ 2i+2j+k=n}} Q^{i,j,k} \to \Jbar^{2,g,n}
	\]
	which is surjective on $K$-points for any field $K$ over $\kk$. (Note that $Q^{i,j,k}$ has more than one point if and only if $2j+k\geq 0$.)
 \end{enumerate}
 \end{theorem}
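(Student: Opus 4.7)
The plan is to prove the three claims in order, leveraging the equivalence $\mathscr{W}_{2,1} \cong [\A^3/\GL_2 \times \G_m]$ and the description of $K$-points of $\Jbar^{2,g,n}$ as linear binary quadratic forms $(V,L,p)$ on $\P^1_K$. First I would note that the two degree conditions in the definition of $\Jbar^{2,g,n}$ pin down both $\deg V = n-g-1$ and $\deg L = 2g+2-n$. In particular $k$ is forced to equal $2g+2-n$, so the disjoint union in part (3) runs effectively over pairs $i \leq j$ with $i+j = n-g-1$.

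For the Artin property in (1), $\mathscr{W}_{2,1}$ has affine diagonal and is of finite type over $\kk$, while $\P^1$ is flat and projective over $\kk$; standard results on Hom-stacks therefore imply that $\Hom(\P^1, \mathscr{W}_{2,1})$ is an Artin stack, and the locally constant degree invariants cut out $\Jbar^{2,g,n}$ as an open-closed substack. For connectedness, each $Q^{i,j,k}$ is connected because it is the quotient of an affine space by a connected algebraic group. To link adjacent strata, I would construct an elementary-transformation family: a flat family of rank-two bundles on $\P^1 \times \A^1$ whose generic fiber is $\O(i) \oplus \O(j)$ and whose special fiber at $0$ is $\O(i-1) \oplus \O(j+1)$, obtained from an extension of $\O(j+1)$ by $\O(i-1)$ that degenerates to the split bundle at $0$. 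Setting $p \equiv 0$ along this family yields a morphism $\A^1 \to \Jbar^{2,g,n}$, showing that the closures of $Q^{i,j,k}$ and $Q^{i-1,j+1,k}$ meet; iterating connects all strata (and is consistent with the specialization chain of Remark~\ref{rmk:unboundedness}).

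For (2), identify $\mathscr{J}^{2,g,n}$ inside $\Jbar^{2,g,n}$ as the locus where the curve $C$ is smooth; by Theorem~\ref{thm:traceable}, $M$ is then automatically a line bundle. Smoothness of $C$ is equivalent to the binary form $\disc_p$ of degree $2g+2$ being squarefree, i.e. $\disc(\disc_p) \neq 0$, which is the nonvanishing of an explicit polynomial in the coefficients of $p$ and therefore an open condition. For (3), given a $K$-point $(V,L,p)$ of $\Jbar^{2,g,n}$, apply Grothendieck's splitting theorem to choose $V \cong \O(i) \oplus \O(j)$ with $i \leq j$ and $L \cong \O(k)$; such a choice transports $p$ to an element of $V^{i,j,k}$ whose class in $Q^{i,j,k}(K)$ maps to $(V,L,p)$. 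The parenthetical nontriviality claim follows from the decomposition $\Sym^2(\O(i) \oplus \O(j)) \otimes \O(k) \cong \O(2i+k) \oplus \O(i+j+k) \oplus \O(2j+k)$: the largest summand is $\O(2j+k)$, so $V^{i,j,k} \neq 0$ exactly when $2j+k \geq 0$.

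The main technical obstacle is making the unbalancing step in (1) into an honest morphism $\A^1 \to \Jbar^{2,g,n}$ rather than merely a chain of specializations at the level of topological points: one must exhibit the jumping family of bundles together with the zero section as a genuine $\A^1$-family of linear binary quadratic forms. Everything else is essentially bookkeeping with Grothendieck splittings and the standard openness of the smooth locus.
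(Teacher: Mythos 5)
Your proposal is correct and follows essentially the same route as the paper: algebraicity of the Hom-stack into the global quotient $\mathscr W_{2,1}$, Grothendieck splitting for (3), openness of the smooth locus via $\disc(\disc_p)$ for (2), and connectedness by degenerating $p$ to $0$ and restricting a universal extension to a line in an Ext-space so that the split (more unbalanced) bundle appears as the special fiber. The only cosmetic difference is that you connect adjacent strata $Q^{i,j,k}$ and $Q^{i-1,j+1,k}$ one elementary step at a time using $\Ext^1(\O(j+1),\O(i-1))$, whereas the paper connects $(\O(i)\oplus\O(j),\O(k),0)$ directly to any more balanced $(\O(i')\oplus\O(j'),\O(k),0)$ via a line in $\Ext^1(\O(j),\O(i))$; both yield honest morphisms $\A^1\to\Jbar^{2,g,n}$.
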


 \begin{proof}
We begin with a proof of (3).  Let $K$ be a field extension of $\kk$.  We have the equality of sets $\Hom(\P^1_{\kk},\mathscr W_{2,1})(K)=\mathscr W_{2,1}(\P^1_K)$, and hence equation \eqref{eqn:decomp} shows that every map $\Spec(K)\to \Hom(\P^1_{\kk},\mathscr W_{2,1})$ factors through some $Q^{i,j,k}$.  If we set $g:=i+j+k-1$ and $n:=2i+2j+k$, then the induced map from the $Q^{i,j,k}$ to $\Hom(\P^1_{\kk}, \mathscr W_{2,1})$ factors through $\Jbar^{2,g,n}$, yielding the desired statement.

(1)  Since $\mathscr W_{2,1}=[\mathbb A^2/\GL_2\times \G_m]$ is a global quotient stack, \cite[Prop.\ 2.11]{lieblich} proves that $\Hom(\P^1, \mathscr W_{2,1})$ is an Artin stack.  For connectedness, we fix $g$ and $n$.  Let $(i,j,k)$ and $(i',j',k)$ be triples such that $i+j=i'+j'$, and define
\[
g:=i+j+k-1\quad \text{ and }\quad  n:=2i+2j+k.
\]
Assume that $j-i\geq j'-i'$.  By (3), it suffices to show that any point of $Q^{i,j,k}$ and any point $Q^{i',j',k}$ belong to the same connected component of $\Jbar^{2,g,n}$.  Let $(\O(i)\oplus \O(j),\O(k),p)$ represent a point of $|Q^{i,j,k}|$.  By degenerating the section $p$, we may connect this point to $(\O(i)\oplus \O(j), \O(k),0)$.  It thus suffices to connect any two points  $(\O(i)\oplus \O(j),\O(k),0)$ and $(\O(i')\oplus \O(j'),\O(k),0)$ such that $i+j=i'+j'$.

Let $E'=\Ext^1(\O(j), \O(i))$.  Since $j-i\geq j'-i'$, there exists a point $P$ in $E'$ corresponding to a vector bundle of splitting type $\O(i')\oplus \O(j')$.  Let $\ell$ be a line $\mathbb A^1\subseteq E'$ where the origin $o$ of $\ell$ is the origin of $E'$, and where the point $p\in \ell$ passes through the point $P$.  Let $V$ be the restriction of the universal extension on $E'$ to $\ell$.  Then the triplet $(V,\O(k)\otimes \O_{\ell}, 0)$ induces a map $\mathbb A^1\to \Jbar^{2,g,n}$ which sends $o$ to $Q^{i,j,k}$ and $p$ to $Q^{i',j',k}$.  This implies  $Q^{i,j,k}$ and $Q^{i',j',k}$ belong to the same connected component of $\Jbar^{2,g,n}$.

(2)  For a double covering pair $(f: C\to \P^1,M)$, both smoothness of $C$ and invertibility of $M$ are open conditions. (See Theorem~\ref{thm:main2} for the explicit equations defining these properties.)
\end{proof}

\begin{remark}\label{rmk:notdense}
Although the universal Jacobian $\mathscr J^{2,g,n}$ is an open substack of $\Jbar^{2,g,n}$, it is not necessarily dense in $\Jbar^{2,g,n}$.  
For instance, if we take the linear binary quadratic form
\[
(\O_{\P^1}(-1)x\oplus \O_{\P^1}(1)y, \O, \O_{\P^1}, xy+s^2y^2),
\]
then the corresponding point of $\Jbar^{2,g,n}$ does not lie in the closure of $\mathscr J^{2,g,n}$.  This follows from an elementary--but rather involved--computation about linear binary quadratic forms over $\P^1_R$, where $R$ is a discrete valuation ring.
\end{remark}

\begin{remark}\label{rmk:deg0}
Given any line bundle $I$ on $S$, there is a canonical isomorphism $ \Sym^2 V \tesnor L\isom \Sym^2 (V \tesnor I)\tensor(L \tesnor I^{\tensor -2})$.  Under this isomorphism, corresponding sections in $\Sym^2 V \tesnor L$ and $\Sym^2 (V \tesnor I)\tensor(L \tesnor I^{\tensor -2})$ provide the same double cover, but induce different line bundles.  This correspondence thus provides a natural isomorphism  $\Jbar^{2,g,n}\isom\Jbar^{2,g,n+2}$ that takes $(f:C\ra \P^1,L)$ to $(f:C\ra \P^1,L\tensor f^* \O(1))$.
\end{remark}
%
\section{Properties of the pair $(C,M)$, and defining equations for moduli substacks of $\Jbar^{2,g,n}$}\label{sec:sets}
The following theorems provide a dictionary between geometric properties of a double covering pair $(C,M)$ and algebraic properties of the corresponding linear binary quadratic form.  As a result of this dictionary, we will describe the generic double covering pairs in each $Q^{i,j,k}$  strata.
Recall that the discriminant $\disc_p$ of a linear binary quadratic form was defined in \S\ref{subsec:discriminant}.

\begin{theorem}\label{thm:main2}
Let $K$ be a field extension of $\kk$, let $(f: C\to \P^1_K, M)$ be a double covering pair of $\P^1_K$, and let $(V,L,p)$ be the corresponding linear binary quadratic form.  
Let $s,t$ be the variables of $\P^1_K$, and write $p\in K[s,t,x,y]$ as in Equation~\eqref{E:writep}.
We have the following:
\begin{enumerate}
	\item  $C$ is reduced if and only if $\text{disc}_p$ is not uniformly $0$.
	\item  $C$ is integral if and only if $\text{disc}_p$ is not a square, which is true if and only if $p$ is irreducible in $K(s,t)[x,y]$.
	\item  $C$ is smooth if and only if $\text{disc}\left( \text{disc}_p\right)\ne 0$.
	\item  $M$ is a line bundle if and only $p|_{q}\ne 0$ for all $q\in \P^1_K$.
\end{enumerate}
\end{theorem}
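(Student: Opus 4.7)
The plan is to reduce each statement to a computation on affine charts of $\P^1_K$. From Section 2.4, the underlying double cover $f: C \to \P^1_K$ depends only on the pair $(\wedge^2 V \tensor L, \disc_p)$, so on any affine open $U \subset \P^1_K$ over which $\wedge^2 V \tensor L$ trivializes, the restriction $f^{-1}(U) \to U$ has the form $\Spec R[\tau]/(\tau^2 - D)$, where $R = \O_{\P^1_K}(U)$ and $D \in R$ is the local expression of $\disc_p$. Statements (1), (2), and (3) are then purely about the geometry of $C$ and reduce to questions about the single polynomial $\tau^2 - D$, while (4) is about $M$ and follows from the multiplication table of Equation (2.1) together with Proposition~\ref{P:cutout}.

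For (1), the coordinate ring $R[\tau]/(\tau^2-D)$ is reduced if and only if $\tau^2 - D$ is squarefree in $R[\tau]$, which in characteristic $\neq 2$ is equivalent to $D \neq 0$ in $R$, and hence globally to $\disc_p \not\equiv 0$. For (2), the same ring is a domain if and only if $\tau^2 - D$ is prime, which (by Gauss's lemma and unique factorization in $K[s,t]$) is equivalent to $D$ not being a square in $K[s,t]$; this matches the section-level condition that $\disc_p$ is not a square. The equivalence with irreducibility of $p = ax^2 + bxy + cy^2$ in $K(s,t)[x,y]$ is the classical fact that a binary quadratic form in $x,y$ factors over a field if and only if its discriminant $b^2 - 4ac$ is a square in that field. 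For (3), the Jacobian criterion applied to $\tau^2 - D$ identifies the singular locus of $f^{-1}(U)$ with the repeated zeros of $D$, so combining the two standard affine charts of $\P^1_K$, $C$ is smooth if and only if the degree $2i+2j+2k$ binary form $\disc_p$ has distinct roots in $\P^1_{\bar K}$, equivalently $\disc(\disc_p) \neq 0$.

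For (4), the forward direction is immediate from Proposition~\ref{P:cutout}: if $p|_q \neq 0$ for every $q$, then $M \cong \iota^* \O_{\P(V)}(1)$ where $\iota: C \hookrightarrow \P(V)$ is the closed immersion cutting out $C$ by $p$, so $M$ is a line bundle. For the converse, assume $p|_q = 0$, so that $a(q) = b(q) = c(q) = 0$, and let $A = \O_{\P^1_K, q}$ with maximal ideal $\m$. Then $D(q) = b(q)^2 - 4a(q)c(q) = 0$, so the scheme-theoretic fiber $f^{-1}(q) = \Spec K[\tau]/(\tau^2)$ has a unique closed point $c$. From the multiplication table of Equation (2.1), both $\tau x$ and $\tau y$ lie in $\m \cdot (Ax + Ay)$, so the maximal ideal $\m_c = \m \O_{C,c} + (\tau)$ satisfies $\m_c M_c = \m M_c$; consequently $M_c / \m_c M_c$ has dimension two over $K$, so $M_c$ is not free of rank one and $M$ is not a line bundle.

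The main subtlety lies in the converse of (4), where one must analyze the local ring $\O_{C,c}$ at the (typically nonreduced) preimage of the bad point $q$ and extract from the explicit multiplication law that the stalk $M_c$ requires two generators rather than one. The remaining work in (1)--(3) is bookkeeping: the conditions on $D$ across the two standard affine charts of $\P^1_K$ assemble into the corresponding global conditions on the binary form $\disc_p$, thanks to unique factorization in $K[s,t]$ and the invariance of the discriminant of a binary form under the change of chart.
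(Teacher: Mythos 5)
Your proposal is correct and takes essentially the same approach as the paper: both arguments reduce (1)--(3) to the local presentation $\tau^2-\disc_p$ of the double cover (the paper packages this as Lemma~\ref{lem:properties}, checking (1)--(2) at the generic point and (3) via local rings, while you use squarefreeness, Gauss's lemma, and the Jacobian criterion on the affine charts), and both prove (4) by Proposition~\ref{P:cutout} in the forward direction and by the multiplication table \eqref{eqn:mult2} in the converse. Your stalk-plus-Nakayama argument for the converse of (4) is the same computation as the paper's restriction to the fiber over $q$, where $\tau$ annihilates $M$ (only note that the fiber and the dimension count should be taken over the residue field $\kappa(q)$, not over $K$).
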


As part of the proof of Theorem~\ref{thm:main2}, we record an analogous result for points of $\Hbar^{2,g}$.

\begin{lemma}\label{lem:properties}
Let $K$ be a field extension of $\kk$, let $(f: C\to \P^1_K)$ be a double cover of $\P^1_K$ represented by the pair $(J,\sigma)$ where $J\in \Pic(\P^1)$ and $\sigma\in H^0(\P^1_K,J\ts)$.  We have the following:
\begin{enumerate}
	\item  $C$ is reduced if and only if $\sigma$ is not uniformly $0$.
	\item  $C$ is integral if and only if $\sigma$ is not a square.
	\item  $C$ is smooth if and only if $\text{disc} (\sigma)\ne 0$.
\end{enumerate}
\end{lemma}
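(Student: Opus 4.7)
The plan is to argue locally. Choose an affine open $U = \Spec A \subset \P^1_K$ on which $J$ is trivialized, so that $\sigma$ restricts to an element of $A$ (which I still call $\sigma$). On $U$ the double cover $C$ restricts to $\Spec A[\tau]/(\tau^2 - \sigma)$, since $\operatorname{char}\kk \ne 2$. Each of the three items then reduces to a simple algebraic statement about this equation, which I globalize using irreducibility of $\P^1_K$ and the UFD property of $K[s,t]$.

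For (1): The ring $A[\tau]/(\tau^2-\sigma)$ is reduced iff $\tau^2-\sigma$ is square-free in $A[\tau]$, and since $\operatorname{char}\kk \ne 2$ the only way this fails is if $\sigma = 0$ (forcing $\tau^2 - \sigma = \tau^2$). Hence $C|_U$ is reduced iff $\sigma|_U$ is not identically zero. Since $\P^1_K$ is irreducible, $\sigma$ either vanishes everywhere or has a proper vanishing locus, yielding the stated dichotomy.

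For (2): Assuming $C$ is reduced, $C$ is integral iff it is irreducible iff the generic fiber of $f$ is irreducible iff $\tau^2-\sigma$ is irreducible in $K(s,t)[\tau]$ iff $\sigma$ is not a square in $K(s,t)$. A brief check using that $K[s,t]$ is a UFD shows that a homogeneous $\sigma \in K[s,t]$ is a square in $K(s,t)$ iff it is a square in $K[s,t]$; globally this says $\sigma = \tau_0^2$ for some $\tau_0 \in H^0(\P^1_K, J)$, which is the natural meaning of ``$\sigma$ is a square''.

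For (3): Where $\sigma \ne 0$ the map $f$ is \'etale, so $C$ is smooth there. At a zero $q$ of $\sigma$ with local uniformizer $\pi$, write $\sigma = u\pi^k$ with $u \in A_q^{\times}$ and $k \geq 1$; a direct computation with $A[\tau]/(\tau^2 - u\pi^k)$ shows that the local ring of $C$ at the preimage of $q$ is regular iff $k = 1$. So $C$ is smooth iff every zero of $\sigma$ is simple, which is precisely the condition $\disc(\sigma) \ne 0$. The only point requiring any care is the local-to-global passage, most delicately in (2), where one must note that squareness of $\sigma$ in $K(s,t)$ coincides with squareness as a global section; this is immediate from $K[s,t]$ being a UFD.
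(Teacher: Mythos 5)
Your proposal is correct and takes essentially the same approach as the paper: work locally where the cover is $\Spec$ of $A[\tau]/(\tau^2-\sigma)$, detect reducedness and integrality by whether $\sigma$ is zero or a square in the function field, and detect smoothness by the local computation $\tau^2=u\pi^k$, which gives a regular local ring exactly when $k=1$, i.e.\ exactly when $\disc(\sigma)\neq 0$. The only cosmetic differences are that in (1) you argue via square-freeness in the UFD $K[z][\tau]$ on each chart, where the paper instead uses flatness of $f$ to reduce to the generic point of $\P^1_K$, and in (2) you leave the trivial case $\sigma=0$ (where both sides of the equivalence fail) implicit; neither affects correctness.
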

\begin{proof}
Let $z=s/t$.
If $J=\O(d)$, then $\sigma/t^{2d}$ is  an element of the function field $K(z)$ of $\P^1_K$.  

(1): Due to the flatness of $f$, we have that $C$ is either reduced or generically nonreduced~\cite[Prop.\ III.9.7]{Hart77}.  We may thus check reducedness over the generic point $\eta$ of $\P^1_K$.  
From the correspondence given in the Appendix \S\ref{S:DoubleCovers}, we see that over $\eta$, the double cover $f$ is given by $\Spec( K(z)[\tau]/(\tau^2-\sigma/t^{2d}))\to \Spec(K(z))$.  This is reduced if and only if $\sigma$ is nonzero.

(2)  As above, we can check whether $C$ is integral over the generic point of $\P^1$.  We see that $\tau^2-\sigma/t^{2d}$ is reducible in $K(z)[\tau]$ if and only if $\sigma$ is a square.

(3) We will show that, for any $q\in \P^1$, $R:=\O_{C,f^{-1}(q)}$ fails to be a regular ring if and only if $q$ is a root of $\sigma$ of multiplicity at least $2$.  If $\sigma(q)\ne 0$, then the restriction of $f$ to $\Spec R$ is an \'{e}tale cover of a regular local ring by Theorem~\ref{T:DoubleCover}, and thus $R$ is regular.  So we may assume that $\sigma(q)=0$, which implies that $R$ is a local ring.  We can write $\O_{\P^1,q}=K[z]_{(z)}$, and we may thus write the restriction of $\sigma/t^{2d}$ to $\O_{\P^1,q}$ as
\[
\sigma/t^{2d}=z^a+(\text{higher order terms in } z)=uz^a,
\]
for some positive integer $a$ and some unit $u$ in $K[z]_{(z)}$.
By \eqref{eqn:mult2}, we have that
\[
R=K[z]_{(z)}[\tau]/(\tau^2-\sigma/t^{2d})=K[z]_{(z)}[\tau]/(\tau^2-uz^a).
\]
This is a regular ring if and only if $a=1$, i.e. if $\sigma$ has multiplicity $1$ at $q$.
Since $\disc(\sigma)=0$ exactly when $\sigma$ has a root of multiplicity $>1$, this proves the claim.
\end{proof}

\begin{proof}[Proof of Theorem~\ref{thm:main2}]
Statements (1) and (3) of the theorem follow from the forgetting map of Equation \eqref{eqn:forget} together with the corresponding statements in Lemma~\ref{lem:properties}.
Statement (2) follows similarly, noting that a quadratic polynomial in $K(s,t)[x,y]$ is irreducible if and only if its discriminant is not a square.

(4)  
This is proven in \cite[Theorem 1.5]{BinQuad}, but we give an argument here for completeness.
By Proposition~\ref{P:cutout}, we see that if $p$ is non-zero in every fiber then $M$ is a line bundle.  If $p|_q=0$, let $F$ be the residue field of $q$.  Then ${\O_C}|_{f^{-1}(q)}=F[\tau]/\tau^2$
and $\tau$ annihilates $M|_{f^{-1}(q)}$ by Equation~\eqref{eqn:mult2}.  Thus $M|_{f^{-1}(q)}$ is not a free rank 1 $F[\tau]/\tau^2$-module.  It follows that $M$ is not a locally free rank 1 $\O_C$-module.
\end{proof}

\subsection{$Q^{i,j,k}$ strata}\label{subsec:strata}
Based on Theorem~\ref{thm:main2}, we can describe the $Q^{i,j,k}$ strata of $\Jbar^{2,g,n}$ where there are pairs $(C,M)$ such that $C$ is reduced, integral, or smooth, or such that $M$ is a line bundle.
We consider a double covering pair $(\O(i)x\oplus \O(j)y, \O(k), p)$, where we write $p=ax^2+bxy+cy^2$ as in Equation~\eqref{E:writep}.

\subsubsection{Smooth and integral: $2i+k\geq 0$ and $i+j+k\geq 1$}
Let $2i+k\geq 0$ and $i+j+k\geq 1$.  Then a generic point of $Q^{i,j,k}$ corresponds to a double covering pair $(C,M)$ where $C$ is a connected smooth curve, and $M$ is a line bundle.
Singular curves and non-line bundles will appear non-generically.

\begin{example}
Let $i=j=1$ and $k=-1$ and consider $p=sx^2-ty^2$.
 In $\P^1\times \P^1$ we have a rational curve cut out by $sx^2-ty^2$.  
Over $\P^1_{s,t}$ this is a degree 2 map
with discriminant $4st$ and double fibers over $s=0$ (at $y=0$) and $t=0$ (at $x=0$).  
n this case, $C=\P^1$ and $M=\O(3)$.

If $i=j=m$ and $k=1-2m$,  the same form $p=sx^2-ty^2$ gives the same double cover $C=\P^1\ra \P^1$ but with $M=\O(2m+1)$.  This is a special case of Remark~\ref{rmk:deg0}.
\end{example}

\begin{example}
For any $g\geq 2$, let $i=-g-1, j=0$ and $k=2g+2$, and consider $p=\frac{1}{4}x^2-h(s,t)y^2$, where $h(s,t)$ is a homogeneous polynomial of degree $2g+2$.  Let $\widetilde{h}(s)$ be the dehomogenization of $f$ with respect to $t$.  The form $p$ defines the double covering pair $(C,\O_C)$, where $C$ is the hyperelliptic curve with affine model $u^2=\widetilde{h}(s)$ in $\A^2=\Spec(\kk[u,s])$.  
\end{example}

\begin{example}
If $i=j=1$ and $k=-1$, we can consider $p=sx^2-sy^2$, in which case $C$ is two copies of $\P^1$ attached at a node, and $M$ fails to be a line bundle over the point $[0:1]$.  More precisely, $M$ is isomorphic as an $\O_C$-module to the ideal sheaf of the node.
\end{example}

\begin{figure}
\begin{center}
\begin{tikzpicture}[scale=0.9]
\draw (-4.1,.5) node {$C$};
\draw[<-,thick] (-3.5,1.5).. controls (-2.5,-.5) and (-1.4,2.0) .. (-.5,1);
\draw[-,thick] (-.5,1).. controls (.5,0) and (1.5,2.0) .. (2.5,1);
\draw[-,thick] (2.5,1).. controls (2.75,.75) and (2.75,.5) .. (2.5,0);
\draw[<-,thick] (-3.5,-.5).. controls (-2.5,1.5) and (-1.4,-1.0) .. (-.5,0);
\draw[-,thick] (-.5,0).. controls (.5,1) and (1.5,-1.0) .. (2.5,0);%
\draw[->,thick] (0,-1)--(0,-2);
\draw (.3,-1.5) node  {$f$};
\draw[<->,thick] (-3.4,-2.75)--(3,-2.75);
\draw (1.75,-.275) node {$\bullet$};
\draw (-.5,1) node {$\bullet$};
\draw (2.5,1) node {$\bullet$};
\draw (-1.7,0) node {$\bullet$};
\draw (-4.1,-2.75) node {$\P^1$};
\draw (2,-.5) node {$M$};
\end{tikzpicture}
\hspace{3cm}
\begin{tikzpicture}[scale=0.9]
\draw (3.5,.5) node {$C'$};
\draw[<->,thick] (-3,1)--(3,1);
\draw[<->,thick] (-3,0)--(3,0);
\draw[->,thick] (0,-1)--(0,-2);
\draw (.3,-1.5) node  {$f'$};
\draw[<->,thick] (-3,-2.75)--(3,-2.75);
\draw (1.5,0) node {$\bullet$};
\draw (-.5,1) node {$\bullet$};
\draw (1.5,.4) node {$i$};
\draw (-.5,1.4) node {$j$};
\draw (3.5,-2.75) node {$\P^1$};
\draw (2,-.3) node {$M$};
\end{tikzpicture}
\end{center}
\caption{A generic double covering pair $(C,M)$ from $Q^{i,j,k}$ when $2i+k\geq0$ and $i+j+k\geq 1$ is a smooth hyperelliptic curve $C$ of genus $i+j+k-1$ and a line bundle $M$ of degree $2i+2j+k$.  When $i+j+k=0$, then  a generic curve $C'$ is (geometrically) the union of two copies of $\P^1$, and $M$ is a line bundle of bi-degree $(i,j)$.}
\end{figure}
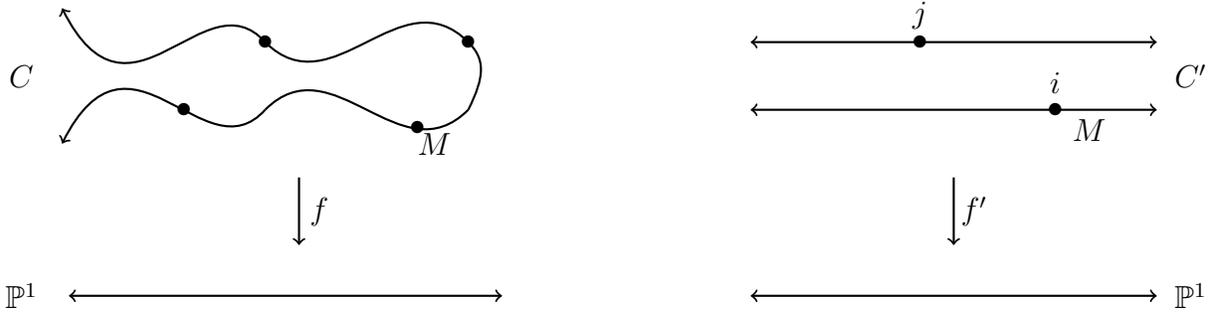

\begin{example}
Let $i=j=m$ and $k=2-2m$.  We consider
 $$
p=stx^2+s^2xy+t^2y^2.
$$
We have $\disc_{p}=s^4-4t^3s$,
and thus $C$ is a smooth genus 1 curve and $M$ is a line bundle of degree $2m+2$.
\end{example}

\subsubsection{Geometrically reducible and smooth: $i+j+k=0$}
Let $i+j+k=0$.  If $i=j,$ then $a,b,c$ are constants, and hence $b^2-4ac\in \kk$.  If $b^2-4ac$ is a (nonzero) square in $\kk$, then $C$ is the union of two copies of $\P^1$; if $b^2-4ac$ is not a square, then $C$ is the union of two copies of $\P^1$ over the field extension $\kk(\sqrt{b^2-4ac})$, but neither copy of $\P^1$ is itself defined over $\kk$.  So in the case $i=j$ and $i+j+k=0$, generically we have that $C$ is two disjoint copies of $\P^1$ (possibly defined over a quadratic extension of $\kk$); also generically (unless $a=b=c=0$), we have that $M$ is a line bundle.

If $i<j$, then $a=0$, and thus the discriminant is $b^2$ where $b$ is a constant.  Thus, if $b\ne 0$, then $C$ is the union of two copies of $\P^1$, each defined over $\kk$.  Note that $b\ne 0$ also implies that $M$ is a line bundle.  Hence, if $(C,M)$ is a generic point of $Q^{i,j,k}$, then $C=C_1\sqcup C_2$ is the disjoint union of two copies of $\P^1$, and $M$ is a line bundle with with $\deg(M|_{C_1})=i$ and $\deg(M|_{C_2})=j$.  (See \S\ref{subsec:toric} or the proof of Theorem~\ref{thm:limits} for details on this bi-degree computation.)

\subsubsection{Reducible and not smooth: $2i+k<0$ and $i+j+k\geq 1$}
If $2i+k<0$ and $i+j+k\geq 1$, then $\disc_p=b^2$, where $b\in H^0(\P^1, \O(i+j+k))$.  So if $(C,M)$ is a generic point of $Q^{i,j,k}$ then $C$ is reduced, but neither integral nor smooth, and $M$ is a line bundle.  Generically, the roots of $b$ are distinct, and hence $C$ consists two copies of $\P^1$ glued together at $i+j+k$ nodes (where each node may only be defined over an extension of $\kk$).  $M$ is generically a line bundle of bi-degree $(i,i+2j+k)$.

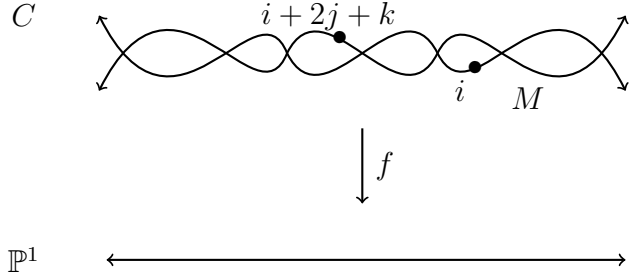
\begin{figure}
\begin{center}

\begin{tikzpicture}[scale=1.0]
\draw (-4.5,.5) node {$C$};
\draw[<-,thick] (-3.5,.5).. controls (-2.5,-1.5) and (-1.4,1.0) .. (-1,0);
\draw[-,thick] (-1,0).. controls (-.5,-1.0) and (0.5,1.0) .. (1,0);
\draw[->,thick] (1,0).. controls (1.5,-1.0) and (2.5,1.5) .. (3.5,-.5);
\draw[<-,thick] (-3.5,-.5).. controls (-2.5,1.5) and (-1.4,-1.0) .. (-1,0);
\draw[-,thick] (-1,0).. controls (-.5,1.0) and (0.5,-1.0) .. (1,0);
\draw[->,thick] (1,0).. controls (1.5,1.0) and (2.5,-1.5) .. (3.5,0.5);
\draw[->,thick] (0,-1)--(0,-2);
\draw (.3,-1.5) node  {$f$};
\draw[<->,thick] (-3.4,-2.75)--(3.5,-2.75);
\draw (-.3,0.2) node{$\bullet$};
\draw (1.5,-0.2) node{$\bullet$};
\draw (-.45,.5) node {$i+2j+k$};
\draw (1.3,-.5) node {$i$};
\draw (2.2,-.6) node {$M$};
\draw (-4.5,-2.75) node {$\P^1$};
\end{tikzpicture}
 
\end{center}
\caption{If $2i+k<0$ and $i+j+k\geq 1$ and $(C,M)$ is a generic point of $Q^{i,j,k}$, then $C$ consists of two copies of $\P^1$ attached at $i+j+k$ nodes, and $M$ is a line bundle of bi-degree $(i,i+2j+k)$.}
\end{figure}

\subsubsection{Non-reduced curves: $i+j+k<0$}
If $i+j+k<0$, then any $p$ has $a=b=0$ and hence $\disc_p= 0$ and $C$ is non-reduced.
Since $C$ has genus $i+j+k-1\leq -2$ and admits a double cover of $\P^1$, it is the unique hyperelliptic (or split) ribbon of genus $i+j+k-1$~\cite[p.\ 723]{EB-ribbons}.
(The hyperelliptic ribbons of genus $g\geq -1$ appear non-generically from linear binary quadratic forms with $i+j+k=g+1$, e.g. from the identically 0 form.)
If $2j+k<0$, then the only possible $p$ is $p\equiv 0$.  If $2j+k=0$, then for $p\not\equiv 0$, the associated $M$ will be a line bundle.  If $2j+k>0$, then the associated $M$ will never be a line bundle.

We can summarize the above discussion in the following table.

\begin{center}
\begin{tabular}{l|l}

Properties of $C,M$ & In which strata generic $p$ gives this \\[2pt]
\hline
\hline
 & \\[-8pt]
$C$ reduced  & $i+j+k\geq 0$ \\[4pt]
\hline
 & \\[-8pt]
$C$ integral & $i+j+k\geq 1$ and $2i+k\geq 0$ \\[4pt]
\hline
 & \\[-8pt]
$C$ smooth  & ($i+j+k\geq 1$ and $2i+k\geq 0$)  \\[4pt]
   &  or $i+j+k=0$  \\[4pt]
\hline
 & \\[-8pt]
$M$ a line bundle  & ($i+j+k<0$ and $2j+k= 0$)   \\[4pt]
   &  or $i+j+k\geq 0$  \\[4pt]
\end{tabular}
\end{center}

\subsection{Moduli substacks of $\Jbar$}\label{SS:inclusions}

The conditions of Theorem~\ref{thm:main2} can be interpreted as defining open substacks of $\Jbar^{2,g,n}$ of geometric interest.  For instance, since conditions 1, 2, and 3 are progressively weaker, we have a sequence of open substacks (for $g\ne -1$)
\begin{equation}\label{eqn:inclusions}
\mathscr J^{2,g,n} \subsetneq \mathscr J_{\text{int}}^{2,g,n}\subsetneq \mathscr J_{\text{red}}^{2,g,n} \subsetneq\Jbar^{2,g,n},
\end{equation}
where we allow the curve $C$ to become increasingly singular.
  The subscripts indicate the pathologies we allow on the curves: $\mathscr J_{\text{int}}$ parametrizes integral curves and $\mathscr J_{\text{red}}$ parametrizes reduced curves.  We also have a similar sequence of open substacks
\begin{equation}\label{eqn:inclusions2}
\mathscr J^{2,g,n} \subsetneq \mathscr J_{\text{lb}}^{2,g,n}\subsetneq \Jbar^{2,g,n}
\end{equation}
where $\mathscr J_{\text{lb}}^{2,g,n}$ parametrizes double covering pairs $(f:C\to \P^1, M)$ such that $M$ is a line bundle.

We can write down explicit equations on $\Jbar^{2,g,n}$ that describe the complements of some of the above moduli substacks. 
With $p$ as in Equation~\eqref{E:writep}, we may write
\[
\text{disc}_p=b^2-4ac=\sum_{\ell=0}^{2i+2j+2k} d_\ell s^\ell t^{2i+2j+2k-\ell},
\]
where $d_\ell\in K$.  The complement of $\mathscr J_{\text{red}}^{2,g,n}$ is given by the equations $d_0=\dots=d_{2i+2j+2k}=0$.
The complement of $\mathscr J^{2,g,n}$ is given by the equation $\disc(\sum d_\ell s^\ell t^{2i+2j+2k-\ell})=0 $.

\section{Definition and properties of $\Jbd$}\label{sec:Jbd}
We mentioned above that the stack $\Jbar^{2,g,n}$ fails to be quasi-compact, and that it can have extraneous components.  In this section, we introduce an open substack $\Jbd^{2,g,n}\subseteq \Jbar^{2,g,n}$ which resolves these issues.  The following theorem illustrates some of the nice properties satisfied by $\Jbd^{2,g,n}$, and they illustrate that $\Jbd^{2,g,n}$ is a sort of moduli compactification of the universal Jacobian $\UJac^{2,g,n}$.
\begin{theorem}\label{thm:Jbounded}
For $g\geq 0$, we have that $\Jbd^{2,g,n}$ is smooth and irreducible, and it contains $\mathscr J^{2,g,n}$ as a dense open substack.  Further, $\Jbd^{2,g,n}$ is unirational, and thus
$\mathscr J^{2,g,n}$ is unirational.
\end{theorem}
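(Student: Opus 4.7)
The plan is to exhibit $\Jbd^{2,g,n}$ as a vector-bundle stack over a smooth, irreducible moduli of vector bundles on $\P^1$ with bounded splitting type, containing a distinguished ``balanced'' dense open stratum that in turn contains $\UJac^{2,g,n}$.

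Setting $d := n-g-1$ and $k := 2g+2-n$, every stratum $Q^{i,j,k'}\subseteq \Jbar^{2,g,n}$ has $k' = k$ and $i+j = d$, so the strata are indexed by the splitting type $(i,d-i)$ of $V = f_*M$. I would first study the ``balanced'' stratum $Q^{i_0,j_0,k}$ with $(i_0,j_0) = (\lfloor d/2\rfloor, \lceil d/2\rceil)$: by Definition~\ref{defn:Qijk} it equals $[V^{i_0,j_0,k}/\GL(\O(i_0)\oplus\O(j_0))\times \G_m]$, the quotient of an affine space by a connected reductive group, so it is smooth, irreducible, and unirational. For $g\geq 0$ it is nonempty and lies in $\Jbd^{2,g,n}$; moreover, by Theorem~\ref{thm:main2}, the nonvanishing of $\disc(\disc_p)$ and the fiberwise nonvanishing of $p$ together cut out the open substack $\UJac^{2,g,n}\cap Q^{i_0,j_0,k}$. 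This substack is nonempty (a smooth hyperelliptic curve together with a generic degree-$n$ line bundle has balanced pushforward, cf.\ the examples of \S\ref{subsec:strata}), hence dense in $Q^{i_0,j_0,k}$ by irreducibility.

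Next one shows every other stratum $Q^{i,d-i,k}\subseteq \Jbd^{2,g,n}$ lies in the closure of $Q^{i_0,j_0,k}$. Such a specialization comes from a one-parameter family $\A^1\to \Jbd^{2,g,n}$ built from a versal extension in $\Ext^1(\O(d-i),\O(i))$ whose generic member is balanced and whose special member is $\O(i)\oplus\O(d-i)$, just as in the proof of Theorem~\ref{thm:main1}(1); any section $p\in V^{i,d-i,k}$ extends to this family because the splitting-type bound of Definition~\ref{defn:Jbd} forces the relevant $H^0$'s to have constant rank along the deformation. This yields the density of $\UJac^{2,g,n}$ in $\Jbd^{2,g,n}$ and the irreducibility of the ambient stack.

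The hard part will be proving smoothness of $\Jbd^{2,g,n}$. The plan is to realize it as a vector-bundle stack over a smooth, irreducible base. Let $\mathcal V^{\text{bd}}$ denote the Artin stack of rank-$2$ vector bundles on $\P^1$ of degree $d$ whose splitting type satisfies the bound of Definition~\ref{defn:Jbd}; since this is an open condition on the smooth stack of all rank-$2$ bundles of degree $d$ on $\P^1$, $\mathcal V^{\text{bd}}$ is smooth, and by the specialization argument above it is irreducible with generic point the balanced bundle. The forgetful morphism
\[
\Jbd^{2,g,n}\longrightarrow \mathcal V^{\text{bd}}\times B\G_m,\qquad (V,L,p)\mapsto (V,L),
\]
has fiber over $(V,L)$ equal to $H^0(\P^1,\Sym^2 V\otimes L)$. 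The key technical input is that the splitting bound is designed precisely so that $H^1(\P^1,\Sym^2 V\otimes L)=0$ everywhere on $\mathcal V^{\text{bd}}$; cohomology and base change then promote the above morphism to a vector-bundle stack, which is in particular smooth. Smoothness and irreducibility of $\Jbd^{2,g,n}$ follow, and unirationality is immediate: $V^{i_0,j_0,k}$ is a rational variety dominating the dense open $Q^{i_0,j_0,k}\subseteq \Jbd^{2,g,n}$ via the quotient map, so $\Jbd^{2,g,n}$, and hence its open substack $\UJac^{2,g,n}$, is unirational.
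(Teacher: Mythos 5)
Your main construction is essentially the paper's own proof.  The paper likewise exhibits $\Jbd^{2,g,n}$ as the total space of a vector bundle over the bounded stack of rank-$2$ bundles (written there as $[\mathbb V(\mathcal F^*)/\G_m]$ over $\mathscr U^{2,d}_k$, which is your $\mathcal V^{\text{bd}}\times B\G_m$ picture), with the splitting bound $2i+k\geq 0$ providing the cohomological input for cohomology and base change (the paper phrases it via global generation of $\Sym^2 V\otimes\O(k)$, you via the equivalent $H^1$-vanishing), and it then reads off smoothness and irreducibility from the base and unirationality from the dominant map $V^{i_0,j_0,k}\to\Jbd^{2,g,n}$; your extra specialization argument between strata is a harmless redundancy once the bundle structure is in place.

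There is, however, one genuine gap: you never establish the containment $\UJac^{2,g,n}\subseteq\Jbd^{2,g,n}$, which is part of the statement (``contains $\UJac^{2,g,n}$ as a dense open substack'').  Your opening claim that the balanced stratum ``in turn contains $\UJac^{2,g,n}$'' is false: for instance, for $M=f^*\O_{\P^1}(m)$ on a smooth hyperelliptic curve of genus $g\geq 1$ one has $f_*M\cong\O(m)\oplus\O(m-g-1)$, which is maximally unbalanced subject to the bound ($2i+k=0$), so $\UJac^{2,g,n}$ meets every stratum of $\Jbd^{2,g,n}$, not just the balanced one.  What your argument actually proves is that $\UJac^{2,g,n}\cap Q^{i_0,j_0,k}$ is dense in $\Jbd^{2,g,n}$; it does not rule out the a priori possibility that some pair (smooth $C$, line bundle $M$) has pushforward violating $2i+k\geq 0$ and hence lies in $\Jbar^{2,g,n}\setminus\Jbd^{2,g,n}$.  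The fix is the paper's one-line observation: if $2i+k<0$ then the coefficient $a$ vanishes identically, so $\disc_p=b^2$ is a square and $C$ is not integral by Theorem~\ref{thm:main2}(2); since a smooth member of $\text{Hur}^{2,g}$ with $g\geq 0$ is integral, this gives $\UJac^{2,g,n}\subseteq\mathscr J^{2,g,n}_{\text{int}}\subseteq\Jbd^{2,g,n}$.  With that step added, your argument goes through.
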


\begin{proposition}\label{P:Jbduc}
The stack $\Jbd^{2,g,n}$ is universally closed.
\end{proposition}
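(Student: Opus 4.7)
The plan is to verify the existence part of the valuative criterion for universal closedness. Let $R$ be a DVR with fraction field $K$, uniformizer $\pi$, and residue field $\kappa$, and suppose we are given a morphism $\Spec K \to \Jbd^{2,g,n}$. By the linear binary quadratic form description of $\Jbar^{2,g,n}$, this corresponds to a triple $(V_K, L_K, p_K)$ on $\P^1_K$, where $V_K$ is a rank-two vector bundle, $L_K = \O_{\P^1_K}(k)$ for the appropriate $k$, and $p_K \in H^0(\P^1_K, \Sym^2 V_K \otimes L_K)$; the splitting type of $V_K$ satisfies the boundedness condition of Definition~\ref{defn:Jbd}. The task is to extend this to a linear binary quadratic form on $\P^1_R$ whose special fiber also lies in $\Jbd^{2,g,n}$, possibly after a finite extension of $R$.

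First I would construct some extension $(V_R, L_R, p_R)$ on $\P^1_R$, ignoring boundedness. Set $L_R := \O_{\P^1_R}(k)$. For $V_R$, take any coherent extension of $V_K$ across the closed fiber of $\P^1_R$ (e.g.\ the pushforward along $\P^1_K \hookrightarrow \P^1_R$) and replace it by its double dual; since $\P^1_R$ is regular of dimension two, the result is a reflexive sheaf of rank two, hence locally free. For $p_R$, the section $p_K$ lies in the generic fiber of the locally free sheaf $\Sym^2 V_R \otimes L_R$, so multiplying by a suitable power of $\pi$ produces a global section on $\P^1_R$; after dividing by the largest power of $\pi$ under which it remains well defined, the restriction $p_R|_\kappa$ is nonzero. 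This already yields a morphism $\Spec R \to \Jbar^{2,g,n}$.

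The main obstacle is that the special fiber need not satisfy the boundedness condition defining $\Jbd^{2,g,n}$: by upper semicontinuity of splitting types on $\P^1$, the restriction $V_R|_\kappa \cong \O(i') \oplus \O(j')$ can have $j' - i'$ strictly larger than on the generic fiber, which, as in Remark~\ref{rmk:unboundedness}, is exactly the failure mode that $\Jbd$ is designed to exclude. To repair this I would apply a finite sequence of elementary transformations to $V_R$ supported on the special fiber: for a closed point $z \in \P^1_\kappa$ and a chosen line $\ell \subset V_R|_z$, replace $V_R$ by the kernel of the induced surjection $V_R \twoheadrightarrow V_R|_z/\ell$. This leaves the generic fiber $V_K$ unchanged (up to isomorphism) and shifts the special-fiber splitting by $(\pm 1, \mp 1)$ depending on $\ell$, so iterating with appropriate choices of $\ell$ allows one to rebalance $V_R|_\kappa$ into the range permitted by $\Jbd^{2,g,n}$. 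In parallel one tracks how $p_R$ transforms, using the local expression \eqref{E:writep} of sections of $\Sym^2 V \otimes L$; each modification multiplies $p_R$ by a local equation at $z$ (or allows one to divide by $\pi$), so that a corresponding section of the new $\Sym^2 V_R \otimes L_R$ is produced.

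The delicate point is this last coordination: one must verify that the lines $\ell$ can always be chosen so that after the modifications the transformed $p_R|_\kappa$ is still nonzero, and that this process terminates inside the splitting-type range prescribed by Definition~\ref{defn:Jbd}. This is essentially where the precise form of the boundedness condition is used, since that condition is formulated exactly to match the range of splittings reachable from the generic fiber through such elementary transformations while maintaining a valid linear binary quadratic form. Granting this combinatorial analysis, the resulting $(V_R, L_R, p_R)$ defines the desired morphism $\Spec R \to \Jbd^{2,g,n}$ extending the given one, and the valuative criterion is verified (after a finite extension of $R$ to handle residue-field issues when choosing closed points $z$).
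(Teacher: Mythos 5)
Your strategy has a genuine gap at exactly the step you flag as ``delicate'': the combinatorial coordination of the elementary modifications with the section $p$ is the entire content of your argument, and it is not carried out. Worse, the specific modification you propose does not do what you need: for a closed point $z\in\P^1_\kappa$, the kernel of $V_R\twoheadrightarrow V_R|_z/\ell$ is the kernel of a surjection from a rank-$2$ locally free sheaf onto a length-one skyscraper supported at a codimension-two point of the regular two-dimensional scheme $\P^1_R$, and such a kernel is torsion-free but \emph{not} locally free (locally it looks like $\m_z\oplus\O$). So the output is no longer a linear binary quadratic form at all; to change the splitting type of the special fibre while keeping $V_R$ locally free you would have to modify along the full special-fibre divisor $\P^1_\kappa$ (quotient onto a line bundle on the special fibre), and then you would still owe the unproved claim that the transformed $p$ remains a regular section with the properties you want, and that the process terminates in the range $2i+k\geq 0$.

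The deeper point is that this difficulty is self-inflicted: you are implicitly trying to keep $p|_\kappa\neq 0$, but $\Jbd^{2,g,n}$ imposes no such condition --- the triples $(\O(i)\oplus\O(j),\O(k),0)$ with $2i+k\geq 0$ are perfectly good points of $\Jbd^{2,g,n}$ (they correspond to non-reduced covers, cf.\ Remark~\ref{rmk:unboundedness}). This is what the paper exploits: it reduces to showing that each stratum $Q^{i,j,k}$ with $2i+k\geq 0$ is universally closed, and there the valuative criterion is immediate --- extend $V_K\cong\O(i)\oplus\O(j)$ and $L_K\cong\O(k)$ by the \emph{same} split bundles over $\P^1_R$ (so the special fibre automatically satisfies the boundedness condition, since the splitting type does not change), and use the scalar $\G_m\subset\GL(V)$ to rescale $p$ so that its coefficients $a,b,c$ lie in $R[s,t]$; the special fibre of $p$ is then allowed to vanish identically. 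No extension of $V_K$ by double duals, no elementary modifications, and no control of the special-fibre splitting type are needed. As written, your proof establishes a morphism $\Spec R\to\Jbar^{2,g,n}$ but not the required lift to $\Jbd^{2,g,n}$, so the proposition is not proved.
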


To define $\Jbd^{2,g,n}$, we first define $\mathscr U^{2,d}$ as the moduli stack of vector bundles $F$ of rank $2$ and degree $d$ on $\P^1$ (see \cite[Example 1.14]{Heinloth} for the basic properties of $\mathscr U^{2,d}$ and an explicit atlas). 
There is a natural surjection
\[
F: \Jbar^{2,g,n}\to \mathscr U^{2,n-g-1}
\]
where we send a triple $(V,L,p)$ to the vector bundle $V$ of rank $2$. 

 To avoid the unboundedness issues that arise in the study of vector bundles on $\P^1$, we seek to restrict to an open substack of $\Jbar^{2,g,n}$ by imposing a boundedness condition on the splitting type of $V$.  In particular, for a fixed integer $k$, we define $\mathscr U^{2,d}_k$ as the open substack of $\mathscr U^{2,d}$ parametrizing rank $2$ vector bundles $V$ where $\Sym^2(V)\otimes \O(k)$ is globally generated.  Thus, the points of $\mathscr U^{2,d}_k$ correspond to vector bundles $V$ of the form
$V=\O(i)\oplus \O(j)$ such that $i\leq j$ and $2i+k\geq 0$.  In particular, $\mathscr U^{2,d}_k$ only has finitely many points, and all points of $\mathscr U^{2,d}_k$ specialize to the bundle $\O(-\lfloor \frac{k}{2}\rfloor)\oplus \O(d+\lfloor \frac{k}{2}\rfloor)$.

\begin{definition}\label{defn:Jbd}
Let $\Jbd^{2,g,n}\subseteq \Jbar^{2,g,n}$ be the open substack $\Jbd^{2,g,n}:=F^{-1}(\mathscr U^{2,d}_{k})$ where $d:=n-g-1$ and $k:=2g-n+2$.  In other words, points of $\Jbd^{2,g,n}$ correspond to linear binary quadratic forms $(\O(i)\oplus \O(j),\O(k),p)$ in which $2i+k\geq 0$.
\end{definition}

Restricting our attention to $\Jbd^{2,g,n}$ places a bound on the possible splitting type of $V$ in the triplet $(V,L,p)$, and we thus avoid the unboundedness issues that arise with $\Jbar^{2,g,n}$. 
This only places a restriction on double covering pairs $(f: C\to \P^1,M)$ in the cases when $C$ is non-integral.  In other words, $\mathscr J_{\text{int}}^{2,g,n}$ is a dense open substack of $\Jbd^{2,g,n}$ (this follows from the proof of Theorem~\ref{thm:Jbounded}.)

When $C$ is reducible, restricting to $\Jbd^{2,g,n}$ places a necessary balancing condition on the bi-degree of $M$.   Namely, let $(f: C\to \P^1, M)$ be a double covering pair where $C$ is a reduced, reducible curve.  Then $C=C_1\cup C_2$ where $C_i\cong \P^1$.  Let $d_i:=\deg(M|_{C_i})$ for $i=1,2$.  Since the hyperelliptic involution exchanges $C_1$ and $C_2$, we may always assume that $d_1\leq d_2$ when considering a double covering pair.  We define the \defi{bi-degree} of $M$ as the ordered pair $(d_1, d_2)$ where $d_1\leq d_2$.

\begin{theorem}\label{thm:limits}
Given a point $\Spec (K) \ra \Jbd^{2,g,n}$ such that the associated 
$C$ is reducible and $M$ is a line bundle of bi-degree $(d_1,d_2)$, we have  $d_1\geq \frac{n}{2}-(g+1)$.
\end{theorem}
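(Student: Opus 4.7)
The plan is to use a Mayer--Vietoris pushforward computation to realize $V=f_*M$ as a subsheaf of $\O(d_1)\oplus \O(d_2)$ on $\P^1_K$, and then combine Grothendieck's splitting of rank-two vector bundles on $\P^1$ with the defining boundedness condition of $\Jbd^{2,g,n}$.

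First I would set up the Mayer--Vietoris sequence. As recorded just before the theorem, when $C$ is reduced and reducible we have $C=C_1\cup C_2$ with each $C_i\cong \P^1_K$. Since the two degrees $\deg(f|_{C_i})$ are positive and sum to $2$, each must equal $1$, so each $f|_{C_i}$ is an isomorphism onto $\P^1_K$. The exact sequence
$$0\to \O_C\to \O_{C_1}\oplus \O_{C_2}\to \O_Z\to 0,$$
where $Z=C_1\cap C_2$ is a finite subscheme, remains exact after tensoring with the line bundle $M$ (here the hypothesis that $M$ is locally free is essential). Pushing forward by the finite, hence exact, functor $f_*$ and identifying $f_*(M|_{C_i})\cong \O(d_i)$ yields a short exact sequence
$$0\to V\to \O(d_1)\oplus \O(d_2)\to T\to 0$$
on $\P^1_K$, where $T$ is a torsion sheaf supported on the image $f(Z)$.

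Next I would extract the key numerical inequality. Writing $V=\O(i)\oplus \O(j)$ with $i\leq j$ and representing the injection into $\O(d_1)\oplus \O(d_2)$ as a $2\times 2$ matrix whose $(r,s)$-entry lies in $H^0(\P^1_K,\O(d_r-a_s))$ with $a_1=i$, $a_2=j$, injectivity forces the determinant to be a nonzero section. If $i>d_1$, then $d_1-i<0$ and, since $j\geq i>d_1$, also $d_1-j<0$; both groups $H^0(\O(d_1-i))$ and $H^0(\O(d_1-j))$ vanish, so the entire first row of the matrix is zero and the determinant vanishes, a contradiction. Hence $i\leq d_1$.

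Finally, the definition of $\Jbd^{2,g,n}$ (Definition~\ref{defn:Jbd}) requires $2i+k\geq 0$, i.e., $i\geq -k/2$. Using the identity
$$\tfrac{n}{2}-(g+1)=\tfrac{2i+2j+k}{2}-(i+j+k)=-\tfrac{k}{2},$$
we conclude $d_1\geq i\geq -k/2=\tfrac{n}{2}-(g+1)$, as desired. The main technical point is the splitting-type analysis in the third paragraph; the rest is routine bookkeeping, relying on the line-bundle hypothesis on $M$ to make the tensored Mayer--Vietoris sequence exact and on the finiteness of $f$ to ensure exactness of $f_*$.
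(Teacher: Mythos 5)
Your argument is correct, but it is genuinely different from the one in the paper. The paper works on the Hirzebruch surface $X=\P(\O(i)\oplus\O(j))$ via the toric construction of \S\ref{subsec:toric}: after passing to $\bar K$, it factors the Cox-ring polynomial $p=p_1p_2$ into factors of bidegree $(1,e_i)$, bounds $e_1\geq i\geq -k/2$ using the effective cone of $X$, and then shows $d_i=e_i$ by an intersection-pairing computation with $D_{\text{hor}}$ and $D_{\text{vert}}$. You instead stay on $\P^1_K$: the Mayer--Vietoris sequence $0\to\O_C\to\O_{C_1}\oplus\O_{C_2}\to\O_Z\to 0$, tensored with the line bundle $M$ and pushed forward along the finite (hence exact) map $f$, exhibits $f_*M\cong\O(i)\oplus\O(j)$ as a subsheaf of $\O(d_1)\oplus\O(d_2)$, and the vanishing of the relevant matrix row when $d_1<i\leq j$ contradicts generic injectivity, giving $d_1\geq i$; the bound $2i+k\geq 0$ from Definition~\ref{defn:Jbd} and the identity $-k/2=\frac{n}{2}-(g+1)$ then finish the proof exactly as in the paper. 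Your route is more elementary and self-contained: it needs no base change to the algebraic closure, no Cox rings, and no intersection theory, only Grothendieck splitting and the fact that an injective map of rank-two bundles on an integral curve is generically an isomorphism (which is the one-line justification you should supply for ``the determinant is a nonzero section''). What the paper's approach buys in exchange is more precise information: the identification $d_i=e_i$ with the bidegrees of the factors of $p$ is reused elsewhere (e.g.\ in Example~\ref{ex:snarl} and the bi-degree statements of \S\ref{subsec:strata}), whereas your argument yields only the inequality needed for the theorem.
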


\begin{remark}\label{rmk:caporaso}
If $C$ is also stable, then Caporaso imposes a necessary and sufficient bi-degree condition for a line bundle $L$ on $C$ to define a point of her compactified Picard variety.  In this case, Caporaso's ``basic inequality'' reduces to the condition that
\[
\frac{n}{2}-\frac{g+1}{2}\leq d_i\leq \frac{n}{2}+\frac{g+1}{2}
\]
for $i=1,2$~\cite[p.\ 593]{caporaso1}.
\end{remark}
We now prove the results of this section.

\begin{proof}[Proof of Theorem~\ref{thm:Jbounded}]
Fix $g$ and $n$, and let $d:=n-g+1$ and $k:=2g-n+2$.  We will show that $\Jbd^{2,g,n}$ is isomorphic to the stack quotient $[\mathbb V(\mathcal F^\vee)/\G_m]$, where $\mathbb V(\mathcal F^\vee)$ is a vector bundle over $\mathscr U^{2,d}_k$.  This is sufficient to prove the smoothness and irreducibility of $\Jbd^{2,g,n}$, as these properties are then inherited by $\Jbd^{2,g,n}$ from $\mathscr U^{2,d}_k$.  

Let $T$ be the open subset of the Quot scheme $\text{Quot}^{2t+d+2}(\O_{\P^1}(-k)^{2k+d+2})$ which parametrizes vector bundles $E$ of rank $2$ and degree $d$, together with a surjection $\O_{\P^1}(-k)^{2k+d+2}\to E$ that induces a surjection on global sections.  By construction, we have that $\mathscr U^{2,d}_k\cong [T/\GL_{2k+d+2}]$, where the $\GL_{2k+d+2}$ acts on the surjection.  The universal vector bundle $\mathcal V_T$ on $T\times \P^1$ descends to a universal vector bundle $\mathcal V$ on $\mathscr U^{2,d}_k\times \P^1$ in the lisse-\'etale topology.  

Let $\pi_1: \mathscr U^{2,g}_k\times \P^1 \to \mathscr U^{2,g}_k$ be the first projection map and let $\pi_2: \mathscr U^{2,g}_k\times \P^1 \to \P^1$ be the second projection.  Define
\[
\mathcal S:=\left(\Sym^2(\mathcal V)\otimes \pi_2^*( \O_{\P^1}(k))\right)
\]
and $\mathcal F:=(\pi_1)_*(\mathcal S)$.  We will show that cohomology and base change commute for $\mathcal S$ with respect to $\pi_1$ in all degrees.  We can check this fact smooth locally in the lisse-\'etale topology, and hence we may pull back via the smooth cover $q:T\times \P^1 \to \mathscr U^{2,d}_k\times \P^1$ and check the corresponding fact for $q^* \mathcal S$.   
By ~\cite[Theorem III.12.11]{Hart77}, it suffices to show that $R^i(\pi_1)_*(q^*\mathcal S)$ vanishes for $i>0$; this follows from the fact that the fibers of $\mathcal S$ are globally generated vector bundles on $\P^1$.
We conclude that cohomology and base change commute for $q^*\mathcal S$ with respect to $\pi_1$ in all degrees. In particular, $\mathcal F$ is a vector bundle of rank $3d+3k+3$ on $\mathscr U^{2,g}_k$.

Let $S\to \mathscr U^{2,g}_k$ be a map from a scheme, and consider the commutative diagram
\[
\xymatrix{
\P^1_S\ar[rr]^{\nu} \ar[d]^{\pi_1'}&&\mathscr U^{2,g}_k\times \P^1\ar[d]^{\pi_1}\\
S\ar[rr]^{\iota} &&\mathscr U^{2,g}_k
}.
\]
By cohomology and base change for $\mathcal S$, we have the isomorphism
\begin{equation}\label{eqn:basechange}
(\pi_1')_*\nu^*\left(\mathcal S\right)\cong \iota^*(\pi_1)_*\left(\mathcal S \right)=\iota^*(\mathcal F).
\end{equation}

Let $\mathbb V(\mathcal F^\vee):=\underline{\Spec}(\Sym(\mathcal F^\vee))$ and let $\G_m$ act on $\mathbb V(\mathcal F^\vee)$ by scalar multiplication along the fibers.  We claim that $\Jbd^{2,g,n}\cong [\mathbb V(\mathcal F^\vee)/\G_m]$.  To prove this, we will illustrate a natural map between these stacks which induces an isomorphism of groupoids for any test scheme $S$.  Consider the groupoid $[\mathbb V(\mathcal F^\vee)/\G_m](S)$.  Since the $\G_m$ action only acts on the fibers of $\mathbb V(\mathcal F^\vee)$, we may view the objects of this groupoid as pairs $(\iota: S\to \mathscr U^{2,g}_k, \psi: \iota^*\mathcal F^\vee \to L)$ where $L\in \Pic(S)$.  The map $\iota$ and the projection $\pi_1'$ yield a commutative diagram as above.  By \eqref{eqn:basechange} just above and the fact that $\iota^*$ commutes with dualizing, the map $\psi$ is equivalent to a map:
\[
( \iota^*\mathcal F)^*\cong  \left( (\pi_1')_*\nu^* \mathcal S\right)^* \to L.
\]
This map is a section of
\[
\Hom_{S}\left( \left( (\pi_1')_*\nu^* \mathcal S\right)^* , L\right)\cong \Hom_{S}\left(L^{-1},(\pi_1')_*\nu^* \mathcal S\right).
\]
Let $\widetilde{L}:=(\pi_1')^*L$.  By adjunction for $(\pi_1')_*$ and $(\pi_1')^*$, we obtain a section of 
\[
\Hom_{\P^1_S}((\pi_1')^*L^{-1},\nu^* \mathcal S)\cong\Hom_{\P^1_S}(\O_{\P^1_S},\nu^* \mathcal S\otimes \widetilde{L}).
 \]
 We label the global section of $\Hom_{\P^1_S}(\O_{\P^1_S},\nu^* \mathcal S\otimes \widetilde{L})$ by $p$.  Note that $\nu^* \mathcal E$ is a rank $2$ vector bundle on $\P^1_S$.  Further, since $\Sym^2$ commutes with $\nu^*$, we have $\nu^*\mathcal S\cong \Sym^2(\mathcal E)\otimes \O_{\P^1_S}(k)$.  Thus, the object $(\iota: S\to \mathscr U^{2,g}_k, \psi: \iota^*\mathcal F^\vee \to L)$ of $[\mathbb V(\mathcal F^\vee)/\G_m]$ induces a linear binary quadratic form $(\mathcal E, \O_{\P^1_S}(k)\otimes \widetilde{L}, p)$, which is an object of the groupoid $\Jbd^{2,g,n}(S)$.  The morphisms of this object in the groupoid $[\mathbb V(\mathcal F^\vee)/\G_m]$ are given by the product of $\Aut(\mathcal E)$ and $\G_m=\Aut( \O_{\P^1_S}(k)\otimes \widetilde{L})$, and the same is true for this object in the groupoid $\Jbd^{2,g,n}(S)$.  Hence, we have a well-defined map of stacks $[\mathbb V(\mathcal F^\vee)/\G_m]\to \Jbd^{2,g,n}$.  
 
To produce the inverse map, we start with an object $(\mathcal E, \widetilde{L}, p)$ of $\Jbd^{2,g,n}(S)$.  We have that $\widetilde{L}_s\cong \O_{\P^1}(k)$ for all $s\in S$, and we may thus write $\widetilde{L}\cong \left( (\pi_1')^* L\right) \otimes \O_{\P^1}(k)$ for some $L\in \Pic(S)$.  We may then reverse all of the equivalences in the above paragraph to produce an object of $[\mathbb V(\mathcal F^\vee)/\G_m](S)$.  Since the morphisms in both groupoids are given by the product of $\Aut(\mathcal E)$ and $\G_m$, this yields a well defined morphism of stacks $\Jbd^{2,g,n}\to [\mathbb V(\mathcal F^\vee)/\G_m]$ which is clearly the inverse of the morphism constructed above.  Hence, we have produced the desired isomorphism $\Jbd^{2,g,n}\cong [\mathbb V(\mathcal F^\vee)/\G_m]$. It follows that $\Jbd^{2,g,n}$ is a smooth, irreducible, Artin stack.

We next show that $\UJac^{2,g,n}$ is a dense open subset of $\Jbd^{2,g,n}$.  Since $\Jbd^{2,g,n}$ is irreducible, it suffices to show that $\UJac^{2,g,n}$ is an open subset of $\Jbd^{2,g,n}$.  In fact, by Theorem~\ref{thm:main2}(1), it suffices show the containment $\UJac^{2,g,n}\subseteq \Jbd^{2,g,n}$.  We prove this via the stronger statement that $\mathscr J^{2,g,n}_{\text{int}}\subseteq \Jbd^{2,g,n}$.  Consider a double covering pair $(C,M)$ of $\P^1_K$ with $C$ integral, i.e. a point of $\mathscr J_{\text{int}}^{2,g,n}$. If we write the corresponding linear binary quadratic form as in Equation~\eqref{eqn:mult2}, then we cannot have $a=0$ by Theorem~\ref{thm:main2}(2).  It follows that $\mathscr J_{\text{int}}^{2,g,n}\subset\Jbd^{2,g,n}$, as desired.

Finally, we prove the unirationality statement.  This follows immediately from the irreducibility of $\Jbd^{2,g,n}$ combined with Theorem~\ref{thm:main1}(3).  In particular, if we set $i_0:=\lfloor \frac{d}{2}\rfloor$ and $j_0:=\lceil \frac{d}{2} \rceil$, then the map $V^{i_0,j_0,k}\to \Jbd^{2,g,n}$ is dense, as the image of this map contains the generic point of $\Jbd^{2,g,n}$.  
\end{proof}

\begin{proof}[Proof of Proposition~\ref{P:Jbduc}]
Since  $\Jbd^{2,g,n}$ is a union of $Q^{i,j,k}$ strata, it suffices to show that each $Q^{i,j,k}$ stratum is universally closed.
For this, we use the valuation criterion of \cite[Th\'{e}or\`{e}me (7.3)]{lmb}.
Let $R$ be a valuation ring with fraction field $K$.
Suppose we have a $\Spec(K)$ point of $Q^{i,j,k}$, corresponding to a linear binary quadratic form $(V,L,p)$ 
on $\P^1_{K}$.
We write $p=ax^2+bxy+cy^2\in H^0(\P^1_{K},\Sym^2 V \tensor L)$ as in Equation~\eqref{E:writep}, so that $a,b,c\in K[s,t]$.  
We can use $\G_m\subset\GL(V)$ to clear denominators of $a,b,c$ so that we may assume $a,b,c\in R[s,t]$.  This provides the extension of $(V,L,p)$ to $\Spec R$ that is desired. 
\end{proof}

\begin{proof}[Proof of Theorem~\ref{thm:limits}]
It suffices to prove the theorem after replacing $K$ by its algebraic closure, and hence we assume that $K$ is algebraically closed.
We have $(C,M)\in \Jbar^{2,g,n}(K)$ where $M$ is a line bundle and where $C$ is a reduced, reducible curve.  By the stratification of Theorem~\ref{thm:main1}(3), 
we may assume that $(C,M)$ corresponds to a linear binary quadratic form $(\O(i)\oplus \O(j), \O(k), p)$ over $\P^1_K$.
We have $i\leq j$ (without loss of generality), $2i+k\geq0$ (by definition of $\Jbd^{2,g,n}$) and $p$ is not zero in any fiber over $\P^1_K$ (by Theorem~\ref{thm:main2}(4)).  Under this correspondence, we have $\deg M=n:=2i+2j+k$ and $p_a(C)=g:=i+j+k-1$.  Our goal is to show that the bi-degree $(d_1, d_2)$ of $M$ satisfies $d_1\geq \frac{n}{2}-g-1$.

Our proof relies on the construction of \S\ref{subsec:toric}, and we continue with the notation introduced there.  Recall that $X$ is the Hirzebruch surface $\P(\O(i)\oplus \O(j))$ over $\P^1$. We have chosen an ordered basis $(D_{\text{hor}}, D_{\text{vert}})$ for $\Pic(X)$.  The Cox ring $R$ of $X$ is $R=K[s,t,x,y]$, and the section $p$ defines a unique polynomial $ax^2+bxy+cy^2$ in the degree $(2,2i+2j+k)$ piece of $R$.  We also denote this polynomial by $p$, and it defines the curve $C \subseteq X$ by Proposition~\ref{P:cutout}.  

By assumption $C=C_1\cup C_2$ is reducible, and thus we may factor $p=p_1p_2$ as a polynomial in $R$.   Since the closed subscheme defined by $p$ never has any vertical fibers, it follows that $p_1$ and $p_2$ are polynomials of bidegree $(1,e_1)$ and $(1,e_2)$ in $R$, and that $C_i$ is a curve of type $(1,e_i)$ in $X$.  We may assume that $e_1\leq e_2$.  Since the effective cone of $X$ is generated by rays corresponding to the torus invariant curves~\cite[Thm.~6.2.20]{cox-little-schenck}, we may further conclude that $e_1\geq i$.  Further, since we are considering a point in $\Jbd^{2,g,n}$, we have that $2i+k\geq 0$, and thus $e_1\geq-\frac{k}{2}$.

The relative bundle $\O_{\pi}(1)$ for the map $\pi: X\to \P^1$ corresponds to the divisor $D_{\text{hor}}+(i+j)D_{\text{vert}}$, so the degree of the pullback of $M$ to $C_i$ is given by the intersection pairing:
$d_i=\deg(M|_{C_i})=C_i\cdot \left(D_{\text{hor}}+(i+j)D_{\text{vert}}\right).$
A direct computation then yields that
\begin{align*}
d_i&=C_i\cdot \left(D_{\text{hor}}+(i+j)D_{\text{vert}}\right)\\
&=\left( D_{\text{hor}}+e_iD_{\text{vert}}\right) \cdot \left(D_{\text{hor}}+(i+j)D_{\text{vert}}\right)\\
&=(-i-j)+(e_i+i+j)+0\\
&=e_i
\end{align*}
Thus, the degree of $M|_{C_i}$ equals $e_i$, which is at least $-\frac{k}{2}=\frac{n}{2}-(g+1)$, as claimed.
\end{proof}

 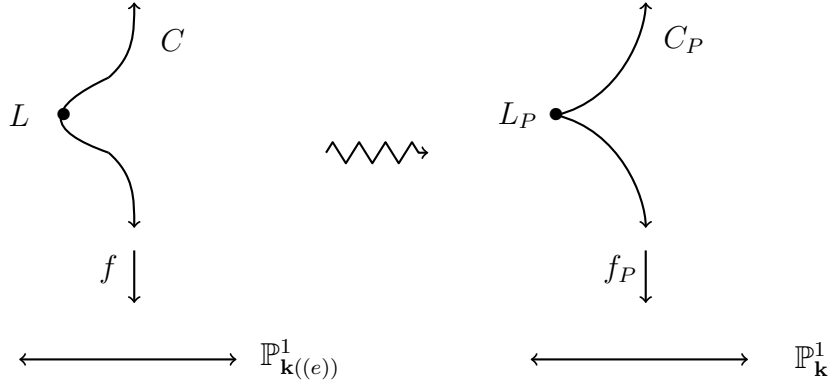
\begin{figure}
\begin{center}
\begin{tikzpicture}[xscale=1.7,yscale=1.0]
\draw (-.5,1.5) node {$C$};
\draw[<-,thick] (-.8,2).. controls (-.8,1.6) and (-.8,1.3) .. (-1,1) ;
\draw[thick] (-1,1).. controls (-1.5,.6) and (-1.5,.3) .. (-1,0) ;
\draw[->,thick] (-1,0).. controls (-.8,-.3) and (-.8,-.6) .. (-.8,-1) ;
\draw (-1.35,.5) node {$\bullet$};
\draw (-1.7,.5) node {$L$};
\draw[->,thick] (-.8,-1.3)--(-.8,-2);
\draw (-1,-1.55) node  {$f$};
\draw[<->,thick] (-1.7,-2.75)--(0,-2.75);
\draw (.5,-2.75) node  {$\P^1_{\kk((e))}$};
\draw [->,thick,
line join=round,
decorate, decoration={
    zigzag,
    segment length=10,
    amplitude=4,post=lineto,
    post length=2pt
}]  (.7,0) -- (1.5,0);
\draw (3.5,1.5) node {$C_P$};
\draw[<-,thick] (3.2,2).. controls (3.2,1.6) and (3,.7) .. (2.5,.5) ;
\draw[->,thick] (2.5,.5).. controls (3,.3) and (3.2,-.6) .. (3.2,-1) ;
\draw (2.5,.5) node {$\bullet$};
\draw (2.2,.5) node {$L_P$};
\draw[->,thick] (3.2,-1.3)--(3.2,-2);
\draw (3,-1.55) node  {$f_P$};
\draw[<->,thick] (2.3,-2.75)--(4,-2.75);
\draw (4.5,-2.75) node  {$\P^1_{\kk}$};
\end{tikzpicture}
\end{center}
\caption{Taking a limit of a family of double covering pairs, as in Example~\ref{ex:hyperelliptic}.
}
\label{fig:hyperelliptic}
\end{figure}

\begin{example}\label{ex:hyperelliptic}
Returning to Question~\ref{question}, the universal closedness of $\Jbd^{2,g,n}$ provides us with explicit limits of the double covering pair.  For instance, consider the double covering pair over $\kk((e))$ given by
\[
(C,L)\longleftrightarrow (\O_{\P^1_{\kk((e))}}\oplus \O_{\P^1_{\kk((e))}}, \O_{\P^1_{\kk((e))}}(3), s^3x^2+et^3xy+\tfrac{1}{4}s^2ty^2).
\]
The discriminant of this form is by $e^2t^6-s^5t$.  Thus, the corresponding curve $C$ over $\kk((e))$ is isomorphic to the hyperelliptic curve with affine model $y^2=s^5-e^2$.  

We may extend this to a family of double covering pairs over $\kk[[e]]$, where the limit over the closed point $P$ is given by
\[
(C_P,L_P)\longleftrightarrow (\O_{\P^1_{\kk}}\oplus \O_{\P^1_{\kk)}}, \O_{\P^1_{\kk}}(3), s^3x^2+\tfrac{1}{4}s^2ty^2).
\]
The discriminant of this form is $s^5t$, and thus the curve $C_P$ is irreducible but singular, with a cusp of the form $y^2=s^5$ lying over the point $s=0$ in $\P^1$.  The limit sheaf $L_P$ is rank $1$ and torsion-free (c.f. Theorem~\ref{thm:traceable}), but it is not a line bundle at the cusp point.  See Figure~\ref{fig:hyperelliptic}.
\end{example}

\section{Comparison with compactified Jacobians}\label{sec:comparison}
The fibers of the map of stacks $\Jbd^{2,g,n}\to \Hbar^{2,g}$ act as compactified Jacobians.  For instance, let $c$ be a $\kk$-point of $\Hbar^{2,g}$ corresponding to the double cover $f:C\to \P^1$ by a smooth curve $C$.  Then $c\times_{\Hbar^{2,g}} \Jbd^{2,g,n}$ is isomorphic to the Jacobian stack $\Jac^n(C)$ of degree $n$ line bundles on $C$.  When the curve $C$ fails to be smooth, the fiber product $c\times_{\Hbar^{2,g}} \Jbd^{2,g,n}$ is closely related to more standard constructions of compactified Jacobians.

Recall that a curve $C$ is a \defi{hyperelliptic ribbon} if $C$ is an irreducible, non-reduced curve with a double cover $f:C\to \P^1$.  For a ring $R$, we say that an $R$-module $M$ is \defi{torsion-free} if $\text{ann}(m)$ is generated by zero divisors of $R$ for every $m\in M$; we say that an $R$-module $M$ has \defi{rank 1} if $M\otimes_R K(R)$ is isomorphic to $K(R)$, where $K(R)$ is total quotient ring of $R$.

\begin{theorem}\label{thm:traceable}
Let $(f: C\to \P^1_K)$ be a double cover of $\P^1_K$.  
\begin{enumerate}
	\item  If $C$ is smooth, then a traceable module is equivalent to a line bundle on $C$.
	\item  If $C$ is reduced, then a traceable module is equivalent to a rank $1$ torsion-free sheaf on $C$.
	\item  If $C$ is non-reduced (and thus a hyperelliptic ribbon), then
a rank 1 torsion-free sheaf is a traceable module, 
 any traceable module is torsion-free,  and a traceable module that is not rank 1 is annihilated by the nilradical of $\O_C$.
\end{enumerate}
\end{theorem}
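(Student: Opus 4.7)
The statement is local on $\P^1_K$, so the plan is to fix a closed point $q\in \P^1_K$, work with $R:=\O_{\P^1_K,q}$ (a DVR with uniformizer $\pi$), and let $S$ denote the local ring of $C$ at $f^{-1}(q)$.  Using the coordinate $\tau'=2\tau+b$ (valid since $\mathrm{char}\,\kk\ne 2$), Equation~\eqref{eqn:mult2} writes $S=R[\tau']/((\tau')^2-\delta)$ where $\delta=b^2-4ac$ is the local form of the discriminant; under this normalization $\tr_S(\tau')=0$, so the traceable condition on an $S$-module $M$ simplifies to: (i) $M$ is $R$-free of rank $2$, and (ii) the $R$-linear endomorphism $T:=\tau'|_M$ has $R$-trace zero.

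Part (1) will follow from part (2), since a rank one torsion-free sheaf on a smooth curve is automatically a line bundle.  For part (2), I first note that reducedness of $S$ forces $\delta\ne 0$ in $R$, and that all minimal primes of $S$ lie over $(0)\subset R$, so $\pi$ is a nonzerodivisor in $S$.  Given $M$ traceable, this nonzerodivisor property promotes $R$-flatness to $S$-torsion-freeness; an $R$-rank count then shows $M\otimes_R K(R)$ is a $2$-dimensional $K(R)$-vector space, and the only obstruction to its being rank one over $K(S)$ occurs when $K(S)\cong K(R)\times K(R)$ (the reducible case at $q$) and $M\otimes K(R)$ has unbalanced generic bi-rank $(2,0)$ or $(0,2)$.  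In those unbalanced cases $\tau'$ acts on $M\otimes K(R)$ as a single scalar $\pm\sqrt{\delta}$, so $\tr_M(\tau')=\pm 2\sqrt{\delta}\ne 0$, contradicting (ii).  Conversely, if $M$ is rank one torsion-free, the same nonzerodivisor argument shows $M$ is $R$-free of rank $2$, and the trace of $\tau'$ on the rank one $K(S)$-module $M\otimes K(R)\cong K(S)$ vanishes since the ``eigenvalues'' $\pm\sqrt{\delta}$ sum to zero in both the field-extension and split cases.

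For part (3), the ribbon hypothesis forces $\delta\equiv 0$, so $S=R[\tau']/((\tau')^2)$ with $\tau'$ generating the nilradical.  If $M$ is rank one torsion-free, the flatness argument again yields $M\cong R^2$ and $T^2=0$; for a $2\times 2$ matrix with square zero over the domain $R$, Cayley--Hamilton forces $\tr(T)=0$ automatically, so $M$ is traceable.  Conversely, if $M$ is traceable and $s=\alpha+\beta\tau'\in S$ is a nonzerodivisor (i.e.\ $\alpha\ne 0$) with $sm=0$: when $\alpha$ is a unit, $s$ itself is a unit and $m=0$; when $\alpha$ is a nonunit, the relation $T^2=0$ combined with saturation of $\ker(T)$ lets me pick an $R$-basis $\{e_1,e_2\}$ of $M$ with $Te_1=0$ and $Te_2=ce_1$, and expanding $sm=0$ in this basis gives $\alpha b=0$ and $\alpha a + \beta b c = 0$, forcing $m=0$ from $R$-flatness---hence torsion-freeness.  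Finally, a short cyclicity check shows that $M\otimes_S K(S)\cong K(S)$ (rank one) if and only if $T\otimes K(R)\ne 0$; so if $M$ is not rank one, then $T\otimes K(R)=0$, and $R$-flatness promotes this to $T=0$, giving $\tau'M=0$ and thus annihilation by the nilradical.

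The main obstacle I expect is the torsion-freeness step in part (3) when $\alpha$ is a nonunit, which requires the saturation property of $\ker(T)$ and the explicit basis calculation.  The other steps are essentially formal consequences of Cayley--Hamilton, the generic rank computations, and the trace condition ruling out pathological generic decompositions.
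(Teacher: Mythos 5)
Your proof is correct, and it follows the paper's overall strategy — localize at a point $w\in\P^1$, write the cover as $\O_{\P^1,w}[\tau']/((\tau')^2-\delta)$, and exploit the fact that traceability makes $M$ free over the base so that it injects into its generic fiber — but several of your key sub-arguments differ from the paper's. For the implication ``traceable $\Rightarrow$ rank $1$'' in part (2), the paper simply invokes Theorem~\ref{thm:main2}(1) and (4) (the binary-forms dictionary shows $M$ is generically a line bundle), whereas you use the trace-zero condition directly: in the split case $K(S)\cong K(R)\times K(R)$ the unbalanced bi-ranks $(2,0)$, $(0,2)$ would give $\tr(\tau')=\pm 2\sqrt{\delta}\neq 0$. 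This is a nice, self-contained replacement that makes visible exactly where the trace hypothesis is used. For ``traceable $\Rightarrow$ torsion-free'' in part (3), the paper argues with associated primes (a maximal annihilator not contained in $(\tau)$ forces $\m_R\in\mathrm{Ass}(M)$, hence a copy of the residue field inside a free $A$-module, a contradiction), while you use the normal form $Te_1=0$, $Te_2=ce_1$ for a square-zero endomorphism of $R^2$ and compute directly; both work, and your route incidentally shows the trace condition is automatic there (Cayley--Hamilton over a domain), which is consistent with the paper. Your part (1) is deduced from part (2) plus ``rank $1$ torsion-free on a smooth curve $=$ line bundle,'' whereas the paper gives an independent argument via $\disc(\disc_p)\neq 0$; your last claim in (3) replaces the paper's appeal to the form $p$ (nonzero $p$ gives a generic line bundle, $p\equiv 0$ gives $\tau M=0$) by the intrinsic observation that rank $1$ is equivalent to $T\otimes K(R)\neq 0$, and $R$-freeness propagates generic vanishing of $T$ to $T=0$. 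The only things left implicit — coherence/finite generation when concluding freeness over the DVR, the unstated but easy direction that a line bundle (or rank $1$ torsion-free sheaf) is traceable in (1), and the bookkeeping that the local statements at closed points assemble to the global rank and nilradical statements — are at the same level of detail as the paper's own proof and are easily supplied.
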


Based on the above theorem, we see that, if $C$ is integral but singular, then $\Jbar^{2,g,n}$ (or, equivalently, $\Jbd^{2,g,n}$) parametrizes double covers plus the data of an (arbitrary) rank $1$ torsion free sheaf.  This is in line with the compactifications for integral curves as in~\cite{altman-kleiman,d'souza}.  

If $C$ is a reduced, reducible curve, then Theorem~\ref{thm:limits} implies that many double covering pairs $(f:C\to \P^1, M)$ do not yield points of $\Jbd^{2,g,n}$.  In particular,we must impose restrictions on the bi-degree of a line bundle to obtain a point of $\Jbd^{2,g,n}$.  This is in a similar spirit as Caporaso's ``basic inequality''~\cite[p.\ 593]{caporaso1}, which also imposes restrictions on the bi-degree of a line bundle in order to obtain a good compactification of the Jacobian of a stable curve.  However, the bi-degree bounds are different in the two cases: see Remark~\ref{rmk:caporaso} above for a more precise contrast between the bi-degree bound of Theorem~\ref{thm:limits} and that of \cite{caporaso1}.

Finally, Chen and Kass~\cite{dawei-jesse} have recently analyzed a compactification $M(\O_C,P_d)$ of the moduli space of degree $n$ line bundles on a ribbon.  In the case when $C$ is a split ribbon, we can compare Theorem~\ref{thm:traceable} with \cite[Theorem A]{dawei-jesse}, and we see that every sheaf parametrized by $M(\O_C,P_d)$ corresponds to a traceable module.  It would be interesting to better understand the comparison between $c\times_{\Hbar^{2,g}} \Jbd^{2,g,n}$ and $M(\O_C,P_d)$ in this case.

\begin{proof}[Proof of Theorem~\ref{thm:traceable}]
For (1), let $(V,L,p)$ be a linear binary quadratic form which represents a double covering pair $(f:C\to \P^1, M)$ where $C$ is smooth.  If $M$ fails to be a line bundle, then $p|_w=0$ for some $w\in \P^1$ by Proposition~\ref{P:cutout}.  However, this would imply that $\disc_p$ has a double root at $w$, which would imply that $\disc (\disc_p)=0$.  By Theorem~\ref{thm:main2}(3), this contradicts our assumption that $C$ is smooth.

For (2), we first observe that traceability is a local condition on $\P^1$, as is the condition that $M$ is torsion free of rank $1$.
It therefore suffices to fix some $w\in \P^1$ and show that these notions coincide over $\Spec(A)$ where $A:=\O_{\P^1,w}$.  Let $\sigma$ be the restriction of $\disc(p)$ to $\O_{\P^1,w}$.  Set $R:=\O_{C,f^{-1}(w)}=A[\tau]/(\tau^2-\sigma)$ and let $M_R$ be the restriction of $M$ to $\Spec(R)$.  We have a double cover $\Spec(R)\to \Spec(A)$.

Let $M$ be a traceable $R$-module.  Since $R$ is reduced, Theorem~\ref{thm:main2}(1) and (4) imply that $M$ is generically a line bundle, and hence that $M$ has rank $1$ over the generic point of any component of $\Spec(R)$.  We thus have $M\otimes_R K(R)\cong K(R)$, where $K(R)$ is the total quotient ring of $R$.  Since $K(R)\cong R\otimes_A  K(A)$, there is a natural isomorphism $M\otimes_R K(R)\cong M\otimes_A K(A)$.  It follows that
\begin{equation}\label{eqn:inj}
M\to M\otimes_R K(R)
\end{equation}
is injective if and only if $M\to M\otimes_A  K(A)$ is injective.  This latter map is injective by definition of traceability, as $M$ is a free $A$-module of rank $2$.  Injectivity of the map in Equation~\eqref{eqn:inj} implies that $M$ is torsion-free.

Conversely, let $M$ be a rank $1$ torsion-free $R$-module.  Since torsion-free $A$-modules are free, this implies that $M$ is a free rank $2$ $A$-module.  For a rank $2$ $A$-module, traceability can be checked over the generic point of $\Spec(A)$.  Since $M$ is rank $1$ as an $R$-module, it is generically a line bundle, and hence we conclude that $M$ is traceable.

For (3), we continue with the notation from (2), except we now have $R\cong A[\tau]/(\tau^2)$ is reduced.  Note that $R$ is local and has only two primes: the maximal ideal $\mathfrak m_R=\mathfrak m_A+(\tau)$, and the minimal prime $(\tau)$.  The implication $M$ rank $1$ and torsion-free $\Rightarrow$ $M$ traceable follows from the same argument as in the proof of (2). 

Let $M$ be traceable.  If $\text{ann}(m)\subseteq (\tau)$ for every $m\in M$, then $M$ is torsion-free.  So we assume, for contradiction, that there exists $m\in M$ such that $\text{ann}(m)\nsubseteq (\tau)$.  Then $(\tau)$ cannot be the only maximal element of the set $\{\text{ann}(m) | m\in M\}$, and hence $\mathfrak m_R$ is an associated prime of $M$~\cite[Prop.~7.B]{matsumura}.  This implies that $M$ contains a copy of the residue field.  Since $R/\mathfrak m_R\cong A/\mathfrak m_A$, we conclude that $\mathfrak m_A$ is an associated prime of $M$ as an $A$-module~\cite[Prop.~7.A]{matsumura}.  This contradicts our traceability hypothesis, as $M$ is a free $A$-module.  Thus we conclude that $M$ is torsion-free.

If the linear binary quadratic form $p$ corresponding to $(C,M)$ is not uniformly $0$, then $M$ is generically a line bundle, and thus $M$ has rank 1.  Otherwise $p=0$, and we see from Equation~\eqref{eqn:mult2} that locally, the nilpotent $\tau$ annihilates $M$. 
\end{proof}

\section{Picard groups}\label{sec:Pic}
We can compute the Picard group of the various stacks considered in this paper, by using the explicit descriptions we have seen.  
In \cite{Mumford1965}, Mumford first used a stack presentation to compute a Picard group.
Examples of similar Picard computations from
explicit descriptions of moduli stacks appear in \cite[\S 5]{AV04}, \cite[\S 6]{RW06} and \cite[Thm.\ 1.1]{bolognesi-vistoli}.  The first two papers both compute the Picard group of a stack 
$H_{\text{sm}}(1,2,g+1)$ similar to $\text{Hur}^{2,g}$.  However, the $T$ points of $H_{\text{sm}}(1,2,g+1)$ correspond to double covers of relative one dimensional Brauer-Severi schemes over $T$, while
the $T$ points of $\text{Hur}^{2,g}$ correspond to double covers of $\P^1_T$. 

\begin{theorem}\label{thm:PicH}
We have the following
\begin{enumerate}
	\item\label{thm:PicH:1}  $\Pic(\Hbar^{2,g})=\Z$ for all $g\in \Z$.
	\item\label{thm:PicH:smooth} $\Pic(\text{Hur}^{2,g})= \Z/ (8g+4) \Z $ for all $g\geq 0$.
	\item\label{thm:PicH:2}
	\begin{enumerate}
		\item   $\Pic(\Hbar^{2,g}_{\text{red}})=\Pic(\Hbar^{2,g})$ for all $g\geq 0$.
		\item  $\Pic(\Hbar^{2,g}_{\text{int}})=\Pic(\Hbar^{2,g})$ for all $g\geq 1$. 
	\end{enumerate}

\end{enumerate}
\end{theorem}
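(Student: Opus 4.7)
The plan is to leverage the global quotient description $\Hbar^{2,g} \cong [V/\G_m]$ from \S\ref{subsec:Hbar}, where $V := H^0(\P^1,\O(2g+2))$ and $\G_m \cong \Aut(\O(g+1))$ acts on $L^{\otimes 2} = \O(2g+2)$ via the square character, i.e., with weight $2$ on each coefficient of $\sigma$. All three parts of the theorem will follow from two standard facts about stacky Picard groups: (i) for an affine space $V$ with a linear $\G_m$-action, equivariant line bundles are classified by characters of the group, so $\Pic([V/\G_m]) = \Hom(\G_m,\G_m) = \Z$, generated by the weight-$1$ character; and (ii) removing a closed substack of codimension $\geq 2$ from a smooth Artin stack does not change the Picard group. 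Fact~(i) immediately proves part~(\ref{thm:PicH:1}), uniformly for all $g \in \Z$ (including the degenerate cases where $V$ is zero- or one-dimensional).

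For part~(\ref{thm:PicH:smooth}), $\text{Hur}^{2,g} \subset \Hbar^{2,g}$ is the complement of the vanishing locus of $\disc(\sigma)$. The classical fact that the discriminant of a binary form of degree $n = 2g+2$ is a polynomial of degree $2n - 2 = 4g+2$ in the coefficients, combined with the weight-$2$ action of $\G_m$ on each coefficient, shows that $\disc(\sigma)$ transforms with weight $2(4g+2) = 8g+4$; that is, it is a global section of the line bundle on $\Hbar^{2,g}$ corresponding to $8g+4 \in \Pic(\Hbar^{2,g}) = \Z$. Since the generic binary discriminant is irreducible, the standard excision sequence for an open complement of a principal divisor reads
\[
\Z \xrightarrow{\,\cdot(8g+4)\,} \Pic(\Hbar^{2,g}) \to \Pic(\text{Hur}^{2,g}) \to 0,
\]
yielding $\Pic(\text{Hur}^{2,g}) = \Z/(8g+4)\Z$.

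For part~(\ref{thm:PicH:2}), I will invoke fact~(ii). By Lemma~\ref{lem:properties}(1), the complement of $\Hbar^{2,g}_{\text{red}}$ in $\Hbar^{2,g}$ is the single $\G_m$-fixed orbit $\{\sigma = 0\}$, of codimension $\dim V = 2g+3 \geq 2$ whenever $g \geq 0$; this proves (a). By Lemma~\ref{lem:properties}(2), the complement of $\Hbar^{2,g}_{\text{int}}$ is set-theoretically the image of the squaring map $H^0(\P^1,\O(g+1)) \to V$, whose source has dimension $g+2$; the complement thus has codimension $(2g+3) - (g+2) = g+1$, which is $\geq 2$ precisely when $g \geq 1$. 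This proves (b), and the sharpness of the bound also clarifies why the hypothesis $g \geq 1$ cannot be relaxed to $g \geq 0$.

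The main technical obstacle is the codimension-$\geq 2$ purity step in part~(\ref{thm:PicH:2}): one needs that on the smooth Artin stack $\Hbar^{2,g}$, line bundles on an open substack extend uniquely across a codimension-$\geq 2$ closed complement. This should reduce via the smooth atlas $V \to [V/\G_m]$ to the classical fact that Weil and Cartier divisors agree on the smooth scheme $V$, so the argument is essentially routine but requires some care in the equivariant/stacky setting. A secondary subtlety is verifying that the image of the squaring map in part~(\ref{thm:PicH:2})(b) really has dimension $g+2$; this amounts to observing that the squaring map has only $\{\pm 1\}$-ambiguity and is therefore generically finite onto its image.
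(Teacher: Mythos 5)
Your proposal is correct and follows essentially the same route as the paper: identifying $\Pic(\Hbar^{2,g})$ with the character group of $\G_m$ via the quotient description $[H^0(\P^1,\O(2g+2))/\G_m]$, using the excision sequence with the irreducible discriminant hypersurface (degree $4g+2$ in the coefficients, hence weight $8g+4$) for part~(2), and removing the codimension-$\geq 2$ loci $\{\sigma=0\}$ and the image of the squaring map for part~(3). The paper simply cites \cite[Lemma 2]{EG98} for the codimension-$\geq 2$ invariance of the equivariant Picard group, which is the purity step you flagged as the main technical point.
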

The main contribution of this paper is the explicit description of the stack $\Jbar^{2,g,n}$ and its moduli substacks, and we also make Picard group computations for these stacks below. 

\begin{theorem}\label{thm:PicJ}
Fix $g>0$ and $n\in \Z$, and such that $n-g$ is even.  Then we have the following:
\begin{enumerate}
	\item\label{thm:PicJ:1}  $\Pic(\Jbd^{2,g,n})=\Z^3$.
	\item\label{thm:PicJ:2}   $\Pic(\mathscr J^{2,g,n})=\Z^2\oplus \Z/(8g+4)\Z$.
	\item\label{thm:PicJ:3}  $\Pic(\Jbd^{2,g,n})=
	\Pic(\Jbar_{\text{bd, lb}}^{2,g,n})$.
\end{enumerate}
\end{theorem}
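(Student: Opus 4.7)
The plan is to exploit the explicit stratification of $\Jbd^{2,g,n}$ by the quotient stacks $Q^{i,j,k}$ from Theorem~\ref{thm:main1}(3), combined with the smoothness of $\Jbd^{2,g,n}$ from Theorem~\ref{thm:Jbounded}. Set $d := n-g-1$ and $k := 2g-n+2$. Since $n-g$ is even, $d$ is odd, so the balanced splitting $V = \O(i_0)\oplus\O(j_0)$ satisfies $i_0 = (d-1)/2 < j_0 = (d+1)/2$, and the open dense stratum is $Q^{i_0,j_0,k}=[V^{i_0,j_0,k}/G]$ with $G = \Aut(\O(i_0)\oplus\O(j_0))\times\G_m$. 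Since $i_0<j_0$, the first factor is a parabolic with Levi $\G_m\times\G_m$ and unipotent radical $\G_a^{j_0-i_0+1}$, so its character group is $\Z^2$ with generators $\chi_1,\chi_2$ acting on the two line subbundles; the second factor contributes a character $\chi_L$. As $V^{i_0,j_0,k}$ is a vector space, I obtain $\Pic(Q^{i_0,j_0,k}) = \chi(G) = \Z^3$.

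For part \eqref{thm:PicJ:1}, I will show that the complement $\Jbd^{2,g,n}\setminus Q^{i_0,j_0,k}$ has codimension at least $2$. Under the forgetful map $\Jbd^{2,g,n}\to\mathscr U^{2,d}_k$ (which factors through a vector bundle by the proof of Theorem~\ref{thm:Jbounded}), the stratum $Q^{i,j,k}$ maps to the locus $\{\O(i)\oplus\O(j)\}$, whose codimension in $\mathscr U^{2,d}_k$ equals $h^1(\End(\O(i)\oplus\O(j))) = j-i-1$. Since $d$ is odd, the possible values of $j-i$ are $1,3,5,\ldots$, so every non-open stratum has codimension at least $2$. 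Together with smoothness this yields $\Pic(\Jbd^{2,g,n})\cong \Pic(Q^{i_0,j_0,k}) = \Z^3$. For part \eqref{thm:PicJ:3}, Theorem~\ref{thm:main2}(4) identifies the complement of $\Jbar_{\text{bd,lb}}^{2,g,n}$ in $Q^{i_0,j_0,k}$ with the image of the incidence variety $\{(q,p)\in \P^1\times V^{i_0,j_0,k}: a(q)=b(q)=c(q)=0\}$. The sections $a,b,c$ have respective degrees $2i_0+k=g$, $d+k=g+1$, $2j_0+k=g+2$; since $g\ge 1$, each has positive degree, so the incidence is cut out by three independent linear conditions on $(a,b,c)$ for each fixed $q$, giving codimension $3$ in $\P^1\times V^{i_0,j_0,k}$ and codimension at least $2$ in $V^{i_0,j_0,k}$. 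Combined with the previous codimension estimate, this proves \eqref{thm:PicJ:3}.

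For part \eqref{thm:PicJ:2}, $\mathscr J^{2,g,n}=\Jbd^{2,g,n}\setminus D$ where $D := \{\disc(\disc_p)=0\}$, and I compute $[D]\in\Pic(\Jbd^{2,g,n})=\Z^3$ via weights. By \S\ref{subsec:discriminant}, $\disc_p \in H^0(\P^1,(\wedge^2 V\otimes L)^{\otimes 2})$. Since $\wedge^2 V$ has $G$-weight $\chi_1+\chi_2$ and $L$ has $G$-weight $\chi_L$, the bundle $(\wedge^2 V\otimes L)^{\otimes 2}$ has $G$-weight $2(\chi_1+\chi_2+\chi_L)$, and every coefficient of $\disc_p$ (viewed as a binary form of degree $2g+2$ in $s,t$) is a function on $V^{i_0,j_0,k}$ of that weight. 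The discriminant of a binary form of degree $2g+2$ is a polynomial of degree $2(2g+2)-2 = 4g+2$ in its coefficients, so $\disc(\disc_p)$ has $G$-weight $(4g+2)\cdot 2(\chi_1+\chi_2+\chi_L) = (8g+4)(\chi_1+\chi_2+\chi_L)$. Granting that $D$ is reduced and irreducible, this gives $[D]=(8g+4)(1,1,1)$ in $\Z^3$, and a straightforward change of basis computes $\Z^3/\Z\cdot(8g+4)(1,1,1)\cong \Z^2\oplus \Z/(8g+4)$, yielding \eqref{thm:PicJ:2}.

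The main technical obstacle is verifying that $D$ is reduced and irreducible, since otherwise the kernel of $\Pic(\Jbd^{2,g,n})\to\Pic(\mathscr J^{2,g,n})$ might be generated by a proper divisor of $(8g+4)(1,1,1)$ and produce the wrong torsion. I will argue that $D$ is set-theoretically the preimage of the discriminant hypersurface $\Delta\subset H^0(\P^1,\O(2g+2))$ under the morphism $\mu\colon V^{i_0,j_0,k}\to H^0(\P^1,\O(2g+2))$, $\mu(a,b,c)=b^2-4ac$, and verify (by explicit parametrization, solving $c = (b^2-F)/(4a)$ for generic $a,b$) that $\mu$ is dominant with irreducible generic fibers; since $\Delta$ is irreducible, so is $D$. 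Reducedness at a generic point follows from the fact that $\Delta$ is generically reduced (a generic form with a double root has only a simple double root, at which the discriminant polynomial vanishes to first order) and that $\mu$ is smooth at a generic point of $D$.
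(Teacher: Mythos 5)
Your overall architecture is the same as the paper's: restrict to the dense balanced stratum $Q^{i_0,j_0,k}$ (with $i_0<j_0$ because $n-g$ even makes $d=n-g-1$ odd), compute $\Pic(Q^{i_0,j_0,k})=\widehat{G}=\Z^3$, check that the complement of this stratum and the non-line-bundle locus each have codimension at least $2$, and obtain part (2) by removing the discriminant divisor $D=\{\disc(\disc_p)=0\}$, whose equivariant class is $(8g+4)(\chi_1+\chi_2+\chi_L)$. All of these steps match the paper (Lemma~\ref{lemma:PicLoci} and the stratum dimension count), and your weight computation agrees with the paper's.

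The genuine gap is exactly at the point you flagged: irreducibility of $D$. Your proposed argument --- ``$\mu(a,b,c)=b^2-4ac$ is dominant with irreducible generic fibers, $\Delta$ is irreducible, hence $D=\mu^{-1}(\Delta)$ is irreducible'' --- is not a valid inference. Irreducibility of the fibers of $\mu$ over a \emph{generic} point of the target says nothing about the fibers over $\Delta$, which is precisely the locus you are pulling back; and even granting information over the generic point of $\Delta$, a preimage can acquire extra components sitting over proper closed subsets of $\Delta$ where the fiber dimension of $\mu$ jumps, so you would also need a fiber-dimension bound over all of $\Delta$. Moreover, the irreducibility of the relevant fiber is itself nontrivial: for fixed $F$ and squarefree $a$ with roots $r_1,\dots,r_g$, the condition $a\mid b^2-F$ breaks into $2^g$ sign choices $b(r_i)=\pm\sqrt{F(r_i)}$, so one needs a monodromy argument (in $a$, looping roots of $a$ around roots of $F$) to see these glue into one component; ``solving $c=(b^2-F)/(4a)$ for generic $a,b$'' does not address this, since for generic $(a,b)$ that quotient is not even a polynomial. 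Since $G$ is connected, each component of $D$ is $G$-invariant, so if $D$ were reducible (or if $\disc(\disc_p)$ were a proper power) the kernel of $\Pic(Q^{i_0,j_0,k})\to\Pic(Q^{i_0,j_0,k}_{\mathrm{sm}})$ could be strictly larger than $\langle(8g+4)(1,1,1)\rangle$, and the torsion in part (2) would come out wrong --- so this step cannot be waved through. The paper closes it by a different mechanism (Lemma~\ref{L:ddirred}): when $2i+k>0$ the line bundle $\O_{\pi}(2)\otimes\pi^*\O_{\P^1}(k)$ is very ample on the Hirzebruch surface $\P(V)$, its complete linear series is $V^{i,j,k}$, and $D$ is identified with the projective dual variety of $\P(V)$, which is irreducible by the general theory of discriminants (GKZ), with Proposition~\ref{P:cutout} and Theorem~\ref{thm:main2}(3) handling the forms with vertical components. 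Either import that duality argument or supply the missing monodromy-plus-fiber-dimension analysis; your sketch for generic reducedness of $D$ (generic smoothness of $\mu$ along $D$ and generic reducedness of $\Delta$) is reasonable and can be made precise, but it does not substitute for irreducibility.
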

Theorem~\ref{thm:PicJ} ignores the cases when $n-g-1$ is even, because we do not know how to compute that Picard group in those cases.  However, we can produce a partial answer in those cases, which we discuss in Remark~\ref{rmk:even} below.  There are also a small number of additional cases, such as $\Pic(\Hbar^{2,-1}_{\text{red}})$, which we also ignore in Theorems~\ref{thm:PicH} and \ref{thm:PicJ}.  We only avoid these cases in order to clarify the statements of the theorems; these boundary cases can be handled by the same techniques as used in the proofs of these two theorems.

\begin{definition}\label{defn:Qijk2}
We set $P^g:=H^0(\P^1, \O(2g+2))$ so that $\Hbar^{2,g}=[P_g/\mathbb G_m]$.  Let the coordinates of $P^g$ be $p_0,\dots,p_{2g+2}$ when $g\geq 0$.
For fixed integers $i,j,k\in \mathbb Z$ with $i\leq j$, recall that 
$V^{i,j,k}=H^0\left( \P^1, \Sym^2(\mathcal O(i)x\oplus \mathcal O(j)y)\otimes \mathcal O(k) \right)$
and has coordinates $a_i,b_i,c_i$, where  $a=\sum a_\ell s^\ell t^{2i+k\ell}$, and $b=\sum b_\ell s^\ell t^{i+j+k\ell}$, and $c=\sum c_\ell s^\ell t^{2j+k\ell}$,
and $p\in V^{i,j,k}$ is written as in Equation~\eqref{E:writep}.
We use the subscripts red, int, sm, lb to define subspaces of $P^g, V^{i,j,k},$ and $Q^{i,j,k}$ corresponding to the moduli substacks defined in \S\ref{SS:inclusions}.
\end{definition}

We first prove Theorem~\ref{thm:PicH}.
\begin{proof}[Proof of Theorem~\ref{thm:PicH}]
\eqref{thm:PicH:1}  As in the proof of \cite[Theorem 6.6]{RW06}, we have $\Pic(\Hbar^{2,g})=\Pic^{\mathbb G_m}(P^g)=\widehat{\mathbb G}_m=\Z$ for all $g\in\Z$.

\eqref{thm:PicH:smooth}  For $g\geq 0$, we have that $P^g \setminus P^{g}_{\text{sm}}$
is a hypersurface defined by $\disc(\sigma)$, where $\sigma\in P^g$ is a binary form of degree $2g+2$.
The discriminant $\disc(\sigma)$ is irreducible (e.g. by \cite[Chapter 1, 1.3 and 4.15]{GKZ}), and hence $P^g\setminus P^g_{\text{sm}}$ is an irreducible hypersurface in $P^g$.    Recall that $\mathbb G_m$ 
acts by degree 2 on the $p_i$.  For $g\geq 0$, we have that $\disc(\sigma)$ is degree $2(2g+1)$ in the $p_i$.  So $\mathbb G_m$ acts by degree
$8g+4$ on $\disc(\sigma)$ for $g\geq 0$.
Thus we can use the restriction exact sequence
$$
 \langle \disc(\sigma) \rangle \ra \Pic(\Hbar^{2,g} ) \ra \Pic( \text{Hur}^{2,g}) \ra 0
$$
to compute that $\Pic(\text{Hur}^{2,g})=
\Z/ (8g+4) \Z$
for $g\geq 0$, 
as in the proof of 
\cite[Theorem 6.6]{RW06}.

\eqref{thm:PicH:2} (a) Note that  $P^{g}\setminus P^{g}_{\text{red}}$ is defined by $\{ p_i=0 | 1\leq i \leq 2g+3\}$.  Hence, if $g\geq 0$, it follows that $P^{g}\setminus P^{g}_{\text{red}}$ has codimension $>1$.  Thus  $\Pic(\mathscr H_{\text{red}}^{2,g})=\Pic(\Hbar^{2,g})=\Z$ for $g\geq 0$ (by \cite[Lemmma 2]{EG98}). 

(b) For $g\geq -1$, the space $P^{g}\setminus P^{g}_{\text{int}}$ is the image of the degree $g+1$ forms under the squaring map, and thus has dimension $g+2$ and codimension $g+1$.  Thus, if $g\geq 1$, then $\Pic(\mathscr H_{\text{int}}^{2,g})=\Pic(\Hbar^{2,g})=\Z$  since $P^{g}\setminus P^{g}_{\text{int}}$ has codimension $>1$. 
\end{proof}

The following lemma is essential in the proof of Theorem~\ref{thm:PicJ}.
\begin{lemma}\label{lemma:PicLoci}
Fix $i,j,k\in \mathbb Z$ and set $g:=i+j+k-1$.  We have the following:
\begin{enumerate}
	\item\label{lemma:PicLoci:1}  $\Pic(Q^{i,j,k})=\begin{cases} \Z^3 &\text{ if } i\ne j \\ \Z^2 &\text{ if } i=j\end{cases}$
	\item\label{lemma:PicLoci:3}    For $g> 0$ and $2i+k> 0$, we have 
	$
	Pic(Q^{i,j,k}_{\text{sm}})=\begin{cases} \Z^2\oplus \Z/(8g+4)\Z &\text{ if } i\ne j \\
	\Z\oplus \Z/(8g+4)\Z& \text{ if } i=j\end{cases}$.
	 \item\label{lemma:PicLoci:2}  
$\Pic(Q^{i,j,k})=\Pic(Q^{i,j,k}_{\text{lb}})$ if $2i+k\geq 0$.
\end{enumerate}
\end{lemma}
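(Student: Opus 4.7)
The plan is to compute each Picard group using the presentation $Q^{i,j,k}=[V^{i,j,k}/G]$, where $G:=\Aut(\O(i)\oplus \O(j))\times \Aut(\O(k))$ acts on the affine space $V^{i,j,k}$.

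For part \eqref{lemma:PicLoci:1}, I observe that every $G$-equivariant line bundle on $V^{i,j,k}$ is trivial as an ordinary line bundle (since $V^{i,j,k}$ is affine and $G$ is connected), so $\Pic(Q^{i,j,k})=\Pic^G(V^{i,j,k})=\hat G$. When $i<j$, the group $\Aut(\O(i)\oplus \O(j))$ is an extension of its diagonal torus $\G_m^2$ by the unipotent group $H^0(\P^1,\O(j-i))$, and unipotent groups contribute no characters, so $\hat G=\Z^2\oplus \Z=\Z^3$. When $i=j$, $\Aut(\O(i)^{\oplus 2})\cong \GL_2$ has character group generated by $\det$, giving $\hat G=\Z^2$.

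For part \eqref{lemma:PicLoci:2}, I show that the complement of $V^{i,j,k}_{\text{lb}}$ has codimension at least two, so that the inclusion $Q^{i,j,k}_{\text{lb}}\hookrightarrow Q^{i,j,k}$ induces an isomorphism on Picard groups via the standard excision sequence. By Theorem~\ref{thm:main2}(4), the complement is the image in $V^{i,j,k}$ of the incidence variety $Z:=\{(q,p)\in \P^1\times V^{i,j,k}:p|_q=0\}$. Under the hypothesis $2i+k\geq 0$ and $i\leq j$, the evaluation maps $H^0(\O(2i+k))\to \O(2i+k)|_q$, $H^0(\O(i+j+k))\to \O(i+j+k)|_q$, and $H^0(\O(2j+k))\to \O(2j+k)|_q$ are all surjective, so the fiber of $Z$ over any $q$ has codimension three in $V^{i,j,k}$. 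Thus $\dim Z=\dim V^{i,j,k}-2$ and its image has codimension at least two.

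For part \eqref{lemma:PicLoci:3}, Theorem~\ref{thm:main2}(3) identifies the complement of $Q^{i,j,k}_{\text{sm}}$ with the vanishing locus of the single function $\disc(\disc_p)$ on $V^{i,j,k}$. Granting that this is an irreducible hypersurface, the restriction exact sequence
$$\Z\to \Pic(Q^{i,j,k})\to \Pic(Q^{i,j,k}_{\text{sm}})\to 0$$
reduces the problem to computing the character of $\disc(\disc_p)$ as an element of $\hat G$. A direct computation, using that $\disc_p=b^2-4ac$ is a $G$-semi-invariant of weight $2$ on each factor of $\hat G$ (the two diagonal characters of $\Aut(\O(i)\oplus\O(j))$ when $i\ne j$, the determinant when $i=j$, and the scaling character of $\Aut(\O(k))$) and that the classical discriminant of a binary form of degree $2g+2$ is homogeneous of degree $2(2g+1)=4g+2$ in its coefficients, shows that $\disc(\disc_p)$ has weight $8g+4$ on each factor. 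Computing the cokernels $\Z^3/\Z\cdot(8g+4,8g+4,8g+4)\cong \Z^2\oplus \Z/(8g+4)\Z$ and $\Z^2/\Z\cdot(8g+4,8g+4)\cong \Z\oplus \Z/(8g+4)\Z$ via Smith normal form finishes the computation.

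The main obstacle will be establishing irreducibility of $D:=\{\disc(\disc_p)=0\}\subset V^{i,j,k}$. My plan is to introduce the incidence variety
$$\widetilde D:=\{(q,p)\in \P^1\times V^{i,j,k}:\disc_p\text{ vanishes to order }\geq 2\text{ at }q\}$$
with its projection $\widetilde D\to D$. The hypotheses $g>0$ and $2i+k>0$ ensure that the fiber of $\widetilde D\to \P^1$ over each $q$ is cut out in $V^{i,j,k}$ by two independent quadratic equations (the value of $\disc_p$ and its first derivative at $q$), and a Bertini-style argument using the surjectivity from part \eqref{lemma:PicLoci:2} should show this fiber is irreducible of the expected codimension. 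Thus $\widetilde D$ is irreducible, and since a generic element of $D$ corresponds to a discriminant with exactly one double root, the projection $\widetilde D\to D$ is birational, so $D$ inherits irreducibility.
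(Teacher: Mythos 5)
Your parts \eqref{lemma:PicLoci:1} and \eqref{lemma:PicLoci:2}, and the exact-sequence-plus-weight computation in \eqref{lemma:PicLoci:3}, coincide with the paper's proof: the paper likewise identifies $\Pic(Q^{i,j,k})$ with $\widehat G$ via the unipotent-by-torus structure of $\GL(\O(i)\oplus\O(j))$, disposes of the line-bundle locus by noting the complement has codimension $>1$ (the paper simply remarks it is cut out by the triple resultant of $a,b,c$; your incidence count over $\P^1$ is a reasonable fleshing-out of the same fact, and correctly uses $2i+k\geq 0$ to make all three evaluation conditions nontrivial), and computes that $\disc(\disc_p)$ has weight $8g+4$ in each character before taking the cokernel. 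The genuine divergence is the irreducibility of the hypersurface $D=\{\disc(\disc_p)=0\}$: the paper proves this (Lemma~\ref{L:ddirred}) by embedding $\P(V)$ via $\O_\pi(2)\otimes\pi^*\O_{\P^1}(k)$ --- this is exactly where $2i+k>0$ enters --- and identifying $D$ with the projective dual of $\P(V)$, which is irreducible by \cite{GKZ}; you instead fiber the incidence variety $\widetilde D$ over $\P^1$. Your route can be completed and is arguably more elementary, but two steps need repair as written. First, the fibers of $\widetilde D\to\P^1$ are cut out by two \emph{specific} quadrics (in the coordinates at $q=[0:1]$, $b_0^2-4a_0c_0$ and $2b_0b_1-4a_0c_1-4a_1c_0$), so Bertini does not apply; instead one can argue directly, e.g.\ on the open set $a_0\neq 0$ solve for $c_0,c_1$ to get an irreducible set of codimension $2$, and check that the locus $a_0=0$ inside the fiber lies in its closure (here $2i+k>0$ guarantees the coordinates $a_0,a_1$ exist). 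Second, the bare fibration criterion ``irreducible base with irreducible equidimensional fibers implies irreducible total space'' fails for non-proper maps, and $\widetilde D\to\P^1$ is not proper; it is rescued here by observing that $\widetilde D$ is cut out by two equations on each affine chart of $\P^1\times V^{i,j,k}$, so every irreducible component has dimension $\geq \dim V^{i,j,k}-1$, strictly larger than the fiber dimension, forcing a single component. Finally, your closing birationality claim is unnecessary: $D$ is the image of $\widetilde D$ (over $\bar\kk$), and the image of an irreducible variety is irreducible, which is all that the restriction sequence requires.
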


\begin{proof}[Proof of Lemma~\ref{lemma:PicLoci}]
\eqref{lemma:PicLoci:1}  We let $G=GL(\mathcal O(i)\oplus \mathcal O(j))\times GL(\mathcal O(k))$ and compute $\widehat{G}$.
Of course, $GL(\mathcal O(k))=\mathbb G_m$.
If $i<j$, we have an exact sequence
$$
0 \ra \kk^{j-i+1} \ra \GL(\mathcal O(i)\oplus \mathcal O(j)) \ra (\kk^*)^2 \ra 1,
$$
where we can think of elements of $\GL(\mathcal O(i)\oplus \mathcal O(j))$ as upper triangular matrices $\left(\begin{smallmatrix}
                         r & s\\
			0& t
                        \end{smallmatrix}\right)$ with $r,t\in \kk^*$ and $s\in H^0(\P^1,\O(j-i))$.
This gives two characters of $\mathcal O(i)\oplus \mathcal O(j)$, and from the exact sequence above we see that any further characters would have to be characters of the 
additive group.  Thus for $i<j$, the character group of $\GL(\mathcal O(i)\oplus \mathcal O(j))$ is $\Z^2$ and $\widehat{G}=\Z^3$.
For $i=j$, we have that $\GL(\mathcal O(i)\oplus \mathcal O(i))=\GL_2$ and thus has character group $\Z$ given by the determinant.
So for $i=j$, we have $\widehat{G}=\Z^2$.
We thus conclude that $$\Pic(Q^{i,j,k})=\Pic_{G}(V^{i,j,k})=
\begin{cases}
\Z^3 &\text{ if } i\ne j \\
\Z^2 &\text{ if } i=j
\end{cases}$$
for all $i,j,k$.

\eqref{lemma:PicLoci:3}
Let $g\geq 0$ and $2i+k> 0$.  
We have that $V^{i,j,k}\setminus V^{i,j,k}_{\text{sm}}$ is a hypersurface given by $\disc(b^2-ac)=0$, which is irreducible by Lemma~\ref{L:ddirred} below.
We see that  $\disc(b^2-4ac)$ is degree $2(2i+2j+2k-1)$ in the $b_\ell b_m,a_\ell c_m$.  
If $i<j$, the characters coming from $\GL(V)$ each act by degree 1 on the $b_\ell$.  One of the characters acts by degree 2 on the $a_\ell$ and degree 0 on the
$c_\ell$ and the other acts by degree 2 on the $c_\ell$ and degree 0 on the
$a_\ell$.  Thus if $i<j$, we have $\disc(b^2-4ac)$ is degree $4(2i+2j+2k-1)$ in each of the characters coming from $\GL(V)$ and degree $4(2i+2j+2k-1)$ in the character from $\GL(L)$ (which acts by degree 1 on $a_\ell,b_\ell,c_\ell$).
Similarly, if $i=j$, then $\disc(b^2-4ac)$ is degree $4(4i+2k-1)$ in the determinant character from $\GL(V)$ and degree $4(4i+2k-1)$ in the character from $\GL(L)$.  We thus have an exact sequence:
\[
 \langle  \disc(b^2-4ac) \rangle \overset{\left(\begin{smallmatrix} 8g+4\\8g+4\\8g+4\end{smallmatrix} \right)}{\longrightarrow} \Pic(Q^{i,j,k}) \ra \Pic( Q^{i,j,k}_{\text{sm}}) \ra 0
\]

Applying \eqref{lemma:PicLoci:1}, we conclude 
$$\Pic(Q^{i,j,k}_{\text{sm}})=
\begin{cases}
\Z^2\oplus\Z/(8g+4)\Z &\text{ if } i< j \\
\Z\oplus \Z/(8g+4)\Z &\text{ if } i=j.
\end{cases}
$$

\eqref{lemma:PicLoci:2}
 The locus $Q^{i,j,k}\setminus Q^{i,j,k}_{\text{lb}}$ has codimension $>1$, since it is 
given by the triple resultant of $a,b,c$.  It follows that $\Pic(Q^{i,j,k}_{\text{lb}})=\Pic(Q^{i,j,k})$ 
for $2i+k\geq 0$.
\end{proof}

We are now prepared to prove the main result of this section.
\begin{proof}[Proof of Theorem~\ref{thm:PicJ}]

\eqref{thm:PicJ:1}  Assume that $n-g$ is even, and let $i=\frac{n-g}{2}-1-\delta$ and $j=\frac{n-g}{2}+\delta$ for any $\delta\geq 0$.
We have that $\dim V^{i,j,k}=3g+6$, and $\dim \GL(\O(i)\oplus\O(j))\times\GL(\O(k))=2\delta+4$.
Thus, we have
$$
\dim  Q^{i,j,k}=
3g+2-2\delta. 
$$
We have that $$\Jbar^{2,g,n}\setminus Q^{\frac{d-g}{2}-1,\frac{d-g}{2},2g+2-d}=\overline{ Q^{\frac{d-g}{2}-2,\frac{d-g}{2}+1,2g+2-d}},$$ which has codimension $2$. 
Thus, $$\Pic(\Jbar^{2,g,n})=\Pic(Q^{\frac{d-g}{2}-1,\frac{d-g}{2},2g+2-d})=\Z^3.$$

\eqref{thm:PicJ:2}
By assumption, we have $g> 0$ and $n-g$ even.
  In addition, since every $Q^{i,j,k}$ locus appearing in $\Jbd^{2,g,n}$ satisfies $2i+k\geq 0$, the above argument implies that
\[
\Pic(\mathscr J^{2,g,n})=\Pic(Q^{\frac{d-g}{2}-1,\frac{d-g}{2},2g+2-d}_{\text{sm}})=\Z^2\oplus\Z/(8g+4)\Z,
\]
where the last equality is by Lemma~\ref{lemma:PicLoci}.

\eqref{thm:PicJ:3}
Essentially the same arguments as in parts \eqref{thm:PicJ:1} and \eqref{thm:PicJ:2} yield
\begin{align*}
\Pic(\Jbar^{2,g,n}_{\text{lb}})&=\Pic(Q^{\frac{d-g}{2}-1,\frac{d-g}{2},2g+2-d}_{\text{lb}})=\Z^3.
\end{align*}
\end{proof}

\begin{remark}\label{rmk:even}
Much of the proof of Theorem~\ref{thm:PicJ} goes through in the case when $n-g-1$ is even.  The difficulty lies in computing $\Pic(\Jbd^{2,g,n})$ in this case.  In particular, if we choose $i_0,j_0$ so that $Q^{i_0,j_0,k}$ is the generic stratum in $\Jbd^{2,g,n}$, then the next largest stratum, $\overline{Q^{i_0-1,j_0+1,k}}$, has codimension $1$.  Thus, to compute $\Pic(\Jbd^{2,g,n})$, we would obtain an exact sequence:
\[
\Z\to \Pic(\Jbd^{2,g,n})\to \Z^2 \to 0
\]
where the image of $\Z$ in $\Pic(\Jbd^{2,g,n})$ is determined by the divisor $\overline{Q^{i_0-1,j_0+1,k}}$.  The remaining question is to determine the image of the map on the left.
\end{remark}

We finally prove the following as required by the proof of Lemma~\ref{lemma:PicLoci} above.
\begin{lemma}\label{L:ddirred}
 Let $g=i+j+k-1\geq 0$ and $2i+k> 0$.  Then $\disc(\disc_p)$ is an irreducible polynomial in the $a_i, b_i, c_i$.  
\end{lemma}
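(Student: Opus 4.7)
The goal is to show $\disc(\disc_p)$ is an irreducible polynomial in $k[V^{i,j,k}]$. Since this ring is a UFD, it suffices to show (a) that the zero locus $\Sigma := V(\disc(\disc_p)) \subset V^{i,j,k}$ is irreducible as a variety, and (b) that $\disc(\disc_p)$ is squarefree as a polynomial.

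For (a), consider the incidence variety
\[
W = \{((a,b,c), [s_0{:}t_0]) \in V^{i,j,k} \times \P^1 : [s_0{:}t_0] \text{ is a multiple root of } \disc_p\},
\]
so the image of $W$ under the first projection $\pi_1$ is $\Sigma$. The natural $\GL_2$-action on $\P^1$ lifts to an action on $V^{i,j,k}$ under which the map $\phi\colon (a,b,c) \mapsto b^2 - 4ac$ is equivariant; hence the second projection $\pi_2\colon W \to \P^1$ is equivariant, and since $\GL_2$ acts transitively on $\P^1$ all fibers of $\pi_2$ are isomorphic. I will compute the fiber over $[0{:}1]$ explicitly: writing $a = \sum_\ell a_\ell s^\ell t^{2i+k-\ell}$ and similarly for $b,c$, vanishing of $\disc_p$ and of $(\disc_p)_s$ at $[0{:}1]$ reduce to the two quadrics $b_0^2 - 4a_0c_0 = 0$ and $b_0b_1 - 2(a_0c_1 + a_1c_0) = 0$. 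On the open locus $\{a_0 \neq 0\}$ one solves for $c_0$ and $c_1$, exhibiting this part of the fiber as an open subset of an affine space; on the complement $\{a_0 = 0\}$ the two equations force $b_0 = 0$ and $a_1c_0 = 0$, yielding a locus of strictly smaller dimension. The fiber is therefore irreducible of codimension~$2$. Since $W$ is cut out in $V^{i,j,k} \times \P^1$ by two nonproportional equations it has pure dimension by Krull's Hauptidealsatz, and a morphism with irreducible equidimensional fibers of the correct dimension over an irreducible base has irreducible total space; thus $W$ is irreducible, and so is $\Sigma = \pi_1(W)$.

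For (b), I will exhibit a smooth point of $\Sigma$ lying over a smooth point of the discriminant hypersurface $D_0 := V(\disc) \subset P^g$. Since $\disc$ is irreducible, $D_0$ is smooth on a dense open. Fix a generic $\sigma \in D_0$ and choose $b \in H^0(\P^1, \O(g+1))$ generically so that $b^2 - \sigma$ has $2g+2$ distinct roots; then factor $(\sigma - b^2)/(-4) = ac$ with $\deg a = 2i+k$, $\deg c = 2j+k$, and $\gcd(a,c) = 1$ on $\P^1$. At the resulting point $(a,b,c) \in \Sigma$, the differential $d\phi(\alpha,\beta,\gamma) = 2b\beta - 4(\alpha c + a\gamma)$ has image containing $a \cdot H^0(\O(2j+k)) + c \cdot H^0(\O(2i+k))$; the Koszul exact sequence for the regular pair $(a,c)$, together with $H^1(\P^1,\O) = 0$, shows that this sum already exhausts $H^0(\O(2g+2))$. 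Hence $\phi$ is smooth at $(a,b,c)$, and so $\Sigma = \phi^{-1}(D_0)$ is smooth there. A smooth point on a hypersurface forces its defining equation to have nonvanishing differential, so $\disc(\disc_p)$ cannot be a nontrivial power of another polynomial and is therefore squarefree.

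The main obstacle is the fiber calculation in (a): a priori, two quadrics in affine space need not cut out an irreducible scheme, so the explicit parameterization on $\{a_0 \neq 0\}$ and the dimension count on its complement are the crucial technical inputs that drive the argument.
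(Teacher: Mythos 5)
Your proof is correct, and it takes a genuinely different route from the paper's. The paper embeds the Hirzebruch surface $\P(\O(i)\oplus\O(j))$ in projective space by the complete linear series of $\O_{\pi}(2)\otimes\pi^*\O_{\P^1}(k)$ --- very ample precisely because $2i+k>0$ --- so that $V^{i,j,k}$ becomes the space of hyperplane sections; it then identifies the locus of sections cutting out a non-smooth subscheme with the vanishing of $\disc(\disc_p)$ (via Proposition~\ref{P:cutout} and Theorem~\ref{thm:main2}(3), with the vertical-fiber case handled separately) and invokes the GKZ theorem that the projective dual of an irreducible variety is irreducible. You instead prove irreducibility of the zero locus directly, via the incidence variety of multiple roots of $\disc_p$ and the $\SL_2$-homogeneity of $\P^1$, and then prove squarefreeness separately by producing a point where $\phi:(a,b,c)\mapsto b^2-4ac$ is smooth (coprimality of $a,c$ plus the Koszul sequence and $H^1(\P^1,\O)=0$) lying over a smooth point of the discriminant hypersurface. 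What each buys: the duality argument is shorter and conceptual, but as written it identifies only the zero \emph{set} with an irreducible variety, and passing from an irreducible zero locus to an irreducible \emph{polynomial} still requires a multiplicity check of exactly the kind your part (b) supplies; your argument is longer and more computational but is self-contained apart from irreducibility of the binary discriminant, and it makes the role of the hypothesis $2i+k>0$ visible --- if $2i+k=0$ the coefficient $a_1$ disappears, the $a_0=0$ branch of your fiber has codimension $2$ instead of $3$, and indeed $\disc(\disc_p)$ then acquires the factor $a_0$, so the hypothesis is sharp. Two small points you should spell out: (i) the claim that a generic $b$ makes $b^2-\sigma$ squarefree deserves a line (for $\sigma$ with a single double root at $q$, the bad locus of $b$ is the hyperplane $b(q)=0$ together with codimension-two conditions at the remaining points of $\P^1$, hence proper); and (ii) the ``irreducible equidimensional fibers over an irreducible base'' step needs the accompanying purity remark --- Krull gives codimension $\le 2$ for every component of $W$, while containment in the fibers gives $\ge 2$ --- or can be bypassed entirely by observing that $W$ is the image of $\SL_2\times F\to V^{i,j,k}\times\P^1$, the sweep of the single irreducible fiber $F$, hence irreducible.
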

\begin{proof}
We have that $\pi: \P(V) \ra \P^1$, and we let $\O_{\pi}(1)$ denote the relative $\O(1)$ for this bundle.  We claim that $\O_{\pi}(2) \tensor \pi^* \O_{\P^1}(k)$ is very ample under the conditions of the lemma.  We prove the claim by applying the discussion in \S\ref{subsec:toric}.  The line bundle $\O_{\pi}(2) \tensor \pi^* \O_{\P^1}(k)$ corresponds to the divisor $(2D_{\text{hor}}+(2i+2j)D_{\text{vert}})+kD_{\text{vert}}$.  If we assume that $i\leq j$, then the effective cone of $\P(V)$ is generated by $D_{\text{vert}}$ and $D_{\text{hor}}+iD_{\text{vert}}$~\cite[Thm.~6.2.20]{cox-little-schenck}.  The divisor $\O_{\pi}(2) \tensor \pi^* \O_{\P^1}(k)$ is thus very ample if it has strictly positive intersection with the generators of the effective cone.  This is equivalent to asking that $2i+k>0$.  

Since $\O_{\pi}(2) \tensor \pi^* \O_{\P^1}(k)$ is very ample, we can embed $\P(V)$ into a projective space via its complete linear
series.
Note that $H^0(\P(V),\O_{\pi}(2) \tensor \pi^* \O_{\P^1}(k))=H^0(\P^1,\Sym^2 V \tesnor L)=V^{i,j,k}$. 
Since $\P(V)$ is irreducible, its projective dual (in the sense of \cite[Chapter 1]{GKZ}) is irreducible by \cite[Chapter 1, Proposition 1.3]{GKZ}.
By definition, the projective dual is the variety of $p\in \P((V^{i,j,k})^*)$ such that $\P(V) \cap \{p=0 \}$ (i.e. the subscheme of $\P(V)$ cut out by $p$) is
non-smooth.  If $p$ has no vertical fibers over $\P^1$, then we know by 
Proposition~\ref{P:cutout} that the double cover $C$ corresponding to $p$ is the subscheme of $\P(V)$ cut out by $p$, and by Theorem~\ref{thm:main2}(3) that it is non-smooth exactly when $\disc(\disc_p)$ vanishes.
If $p$ has a vertical fiber over $\P^1$, then the subscheme of $\P(V)$ cut out by $p$ is not smooth, and also $\disc_p=0$ has at least a double vertical fiber
so that $\disc(\disc_p)=0$.  Thus we conclude that $\disc(\disc_p)$ is irreducible.
\end{proof}

\section{Appendix: Double Covers over $\Spec \Z[1/2]$}\label{S:DoubleCovers}

In this section, we work with schemes over $\Spec \Z[\frac{1}{2}]$.
First, we note that given a double cover $f: C\ra S$, we have that $f_* \O_C$ is an $\OS$-algebra and a locally free rank 2 $\OS$-module.
This construction gives an equivalence between double covers and locally free rank 2 $\OS$-algebras (and $\underline{\Spec}$ gives the inverse construction).

Given a locally free rank 2 $\OS$-algebra $Q$, we can construct a line bundle $L:=Q/\OS$ on $S$. We have a canonical splitting $L\ra Q$ given by
$$
x\mapsto x- \frac{\Tr(x)}{2}.
$$ 
We see the image of $L$ is exactly the traceless elements of $Q$, and we now have $Q=\OS\oplus L$.
We also can construct a map $L\ts \ra \OS$.  
Let, $u,v$ be sections of $L$, anywhere locally, and we claim $uv\in \OS$.
We can work on an open where $L$ is free, and generated by $\tau$.  We have $\tau^2=b\tau +c$, but since
$\tau$ is traceless, we have $b=0$, as desired.
Thus we have a map $L^{\tensor 2} \ra \OS$.

\begin{theorem}\label{T:DoubleCover}
 Over $\Z[\frac{1}{2}]$, 
the above construction gives an equivalence between 
 the stack of double covers and the stack representing the groupoid
$$S \mapsto \{\text{ line bundle $L$ on $S$ and a map }L^{\tensor 2} \ra \OS\}$$
(or equivalently  $S \mapsto \{\text{ line bundle $J$ on $S$ and a global section of }J^{\tensor 2}\}$).
In this equivalnce, \'{e}tale covers correspond to nowhere zero sections.
Up to a factor of $4$, the section of $J^{\tensor 2}$ is the discriminant of the double cover.
\end{theorem}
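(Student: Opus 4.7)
My plan is to construct an inverse functor explicitly and then verify each claim in turn. Given a pair $(L, q)$ with $q: L^{\tensor 2} \ra \OS$, define a sheaf of $\OS$-algebras $Q := \OS \oplus L$ with multiplication
\[
(a, \ell)\cdot (a', \ell') := \bigl(a a' + q(\ell \tensor \ell'),\ a\ell' + a'\ell\bigr),
\]
with unit $(1,0)$. Working locally on an open where $L$ is trivialized by a generator $\tau$ with $q(\tau \tensor \tau) = c$, one sees $Q \cong \OS[\tau]/(\tau^2 - c)$, so $Q$ is a commutative, associative, locally free rank $2$ $\OS$-algebra. Then $C := \underline{\Spec}(Q) \to S$ is a double cover, and the construction is plainly functorial and compatible with base change. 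Since $(L,q)$ determines the algebra up to canonical isomorphism and vice versa (see below), the result will give an equivalence of stacks, not merely of isomorphism classes.

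Next I would verify that the two constructions are mutually inverse. Starting from $(L, q)$ and building $Q$ as above, the matrix of multiplication by $(a,\ell)$ in the local basis $\{1,\tau\}$ has the shape $\bigl(\begin{smallmatrix} a & * \\ * & a\end{smallmatrix}\bigr)$, so the trace map $Q \to \OS$ sends $(a,\ell) \mapsto 2a$; thus the traceless submodule is exactly $L$, and restricting multiplication to $L \tensor L \to Q \to \OS$ (via the canonical splitting) recovers $q$. Conversely, given a double cover $f: C \to S$, the splitting $f_*\O_C = \OS \oplus L$ into trace and traceless parts, together with the restriction of multiplication to $L \tensor L$, produces a pair whose associated algebra is canonically isomorphic to $f_*\O_C$ as $\OS$-algebras. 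Both checks are local, and the use of $\frac{1}{2}$ is essential in the splitting via $x \mapsto x - \Tr(x)/2$.

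For the \'{e}taleness claim, I continue in the local model $Q = \OS[\tau]/(\tau^2 - c)$. The module of K\"ahler differentials $\Omega_{Q/\OS}$ is generated by $d\tau$ modulo the relation $2\tau\,d\tau = 0$, so $Q$ is \'{e}tale over $\OS$ if and only if $2\tau$ is a unit in $Q$; since $(2\tau)^2 = 4c$ and $2$ is invertible, this is equivalent to $c$ being a unit, i.e.\ the corresponding global section of $J^{\tensor 2} = L^{\tensor -2}$ being nowhere zero. For the discriminant statement, the trace pairing on $Q$ in the basis $\{1, \tau\}$ has matrix $\mathrm{diag}(2, 2c)$ with determinant $4c$; by the definition of the discriminant recalled in \S\ref{subsec:discriminant}, this $4c$ is precisely the discriminant section of the double cover, viewed as a section of $(\wedge^2 Q / \OS)^{\tensor -2} = L^{\tensor -2} = J^{\tensor 2}$. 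Under the natural identification this is $4q$, yielding the stated factor of $4$.

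I do not expect any step to present serious difficulty; the only mildly delicate point is ensuring that the assignment $(L,q) \mapsto Q$ really gives an equivalence of stacks (respecting morphisms and base change), rather than merely a bijection on isomorphism classes. Both of these reduce at once to local computations on $Q = \OS \oplus L$.
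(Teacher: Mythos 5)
Your proposal is correct and follows essentially the same route as the paper: the inverse is built by putting the algebra structure on $\OS \oplus L$ and taking $\underline{\Spec}$, with the local model $\tau^2 = c$ giving \'etaleness exactly when $c$ is a unit and the trace form $\mathrm{diag}(2,2c)$ giving discriminant $4c$. You simply supply more detail than the paper does (explicit mutual-inverse check via the trace splitting, the K\"ahler-differentials computation), all of which is sound.
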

\begin{proof}

A map $L^{\tensor 2} \ra \OS$ gives an $\OS$-algebra structure on $\OS \oplus L$.
We can take $\underline{\Spec} (\OS \oplus L)$ to obtain the inverse construction.

Locally, we have $\tau^2=c$, which is \'{e}tale exactly when $c$ is a unit, i.e. when the map $L^{\tensor 2} \ra \OS$
is an isomorphism.  When the map $L^{\tensor 2} \ra \OS$ is the zero map, we get a fat copy of the base (depending on the choice of $L$, for any line bundle $L$).  At points of this base for which $c$ is zero, when we take the fiber we get a double point, and thus the map
is not \'{e}tale over those points.  Over a component of $S$ on which $c$ is not zero, the double cover is generically \'{e}tale.
\end{proof}

\providecommand{\bysame}{\leavevmode\hbox to3em{\hrulefill}\thinspace}
\providecommand{\MR}{\relax\ifhmode\unskip\space\fi MR }
\providecommand{\MRhref}[2]{%
  \href{http://www.ams.org/mathscinet-getitem?mr=#1}{#2}
}
\providecommand{\href}[2]{#2}

\end{document}